\newtheorem{theorem}{Theorem}[section] 
\newtheorem{lemma}[theorem]{Lemma} 
\newtheorem{proposition}[theorem]{Proposition}
\newtheorem{remark}{Remark}[section]
\numberwithin{equation}{section} 
\newcommand{\K}{\mathcal{K}}
\newcommand{\R}{\mathbb{R}}
\newcommand{\uE}{u^E}
\newcommand{\pE}{p^E}
\newcommand{\ep}{\varepsilon}
\newcommand{\OMep}{\Omega^\ep}
\newcommand{\Aep}{A_\ep}
\newcommand{\RR}{\mathbb{R}}
\newcommand{\pa}{\partial}
\newcommand{\unuep}{u^{\nu,\ep}}
\newcommand{\uep}{u^\ep}
\newcommand{\hep}{h^\ep}
\newcommand{\pnuep}{p^{\nu,\ep}}
\newcommand{\phiep}{\phi^\ep}
\newcommand{\Wnuep}{W^{\nu, \ep}}
\definecolor{Green}{rgb}{0.010,0.7,0.02}
\DeclareMathOperator{\supp}{Supp} 
\DeclareMathOperator{\dive}{div}
\renewcommand{\div}{\dive}
\DeclareMathOperator{\curl}{curl} 
\title[Vanishing viscosity in porous medium]{The vanishing viscosity limit in the presence of a porous medium} 
\author[C. Lacave]{Christophe Lacave} 
\author[A. Mazzucato]{Anna Mazzucato}
\address[C. Lacave]{Univ Paris Diderot, Sorbonne Paris Cit\'e, Institut de Math\'ematiques de Jussieu-Paris Rive Gauche, UMR 7586, CNRS, Sorbonne Universit\'es, UPMC Univ Paris 06, F-75013, Paris, France.} 
\email{christophe.lacave@imj-prg.fr}
\address[A. Mazzucato]{Penn State University, University Park, PA 16802, U.S.A.}
\email{alm24@psu.edu}
\thanks{The first author is partially supported by the Agence Nationale de la Recherche, Project DYFICOLTI grant ANR-13-BS01-0003-01, and  by the project \emph{Instabilities in Hydrodynamics} funded by the Paris city hall (program \emph{Emergences}) and the Fondation Sciences Math\'ematiques de Paris.\\ 
The second author was partially supported by the U.S. National Science 
Foundation grants DMS-1009713, DMS-1009714, and DMS-1312727.}
\date{\today}
\begin{document}

\begin{abstract}
We consider the flow of a viscous, incompressible, Newtonian fluid in a 
perforated domain in the plane. The domain is the exterior of a regular lattice 
of rigid particles. We study the simultaneous limit of vanishing particle size
and
distance, and of vanishing viscosity.
Under suitable conditions on the particle size, particle
distance, and viscosity,  we prove that solutions of the Navier-Stokes
system in the perforated domain converges to solutions of the Euler system,
modeling inviscid, incompressible flow, in the full plane. That is, the flow is 
not disturbed by the porous medium and becomes inviscid in the limit. 
Convergence is obtained in 
the energy norm with explicit rates of convergence.
\end{abstract}

\subjclass[2010]{35Q30,35Q31,76S05}
\keywords{Navier-Stokes, Euler, porous medium, vanishing viscosity limit}

\maketitle

\section{Introduction} \label{sec:intro}

This article concerns the vanishing viscosity limit for incompressible, 
Newtonian fluids in a perforated planar domain when the size and distance 
between obstacles vanish. By a perforated domain, we mean the 
exterior of a (finite) regular lattice of rigid particles.
This problem can be viewed as simultaneously taking the limit of vanishing 
viscosity for flows in exterior domains and  study the permeability of a
porous medium under this flow.

From a physical point of view, it is important to study fluid flow through a 
porous medium in the regime of small viscosity, which model for example flow of 
underground water. Away from boundaries, the effect of viscosity is 
negligible generically for such flows, but near walls, viscous 
boundary layers appear where large stresses and vorticity can develop.  The 
layer can in turn destabilize, which lead to boundary layer separation and the 
formation of a turbulent wake. (We refer to \cite{Schlichting} for an 
introduction to the theory of boundary layers.)  

We say that the {\em vanishing viscosity limit} holds on the time interval 
$[0,T]$ if the solutions of the Navier-Stokes equations converge to the solution of the 
Euler equations strongly in the energy norm, i.\thinspace e., strongly in 
$L^\infty([0,T];L^2)$.  
It is not known whether the vanishing viscosity limit holds  in 
domains with boundaries, not 
even in two space dimensions, if no-slip boundary conditions are imposed on 
the velocity. The 
Reynolds number is a dimensionless quantity, which is inversely proportional to 
the viscosity. When the characteristic local length scale near 
the boundary becomes small at a faster rate than the viscosity, one can expect 
that the local Reynolds number stays of order one, preventing the formation of 
a strong boundary layer, as was observed in \cite{ILN09}, where the vanishing 
viscosity limit was established in the case of one shrinking obstacle. 
Homogenizing the Navier-Stokes equations in the perforated (periodic) domain 
with Reynolds number below a critical value yields Darcy's law as in the case 
of the Stoke's system, while homogenizing the Euler equations gives non-linear 
filtration laws that depend on the relation between the particle size and the
characteristic velocity of the problem. In the intermediate regime of high 
Reynolds numbers, asymptotic analysis leads to consider the Prandtl boundary 
layer equations, which have recently been shown to be ill-posed 
\cite{GVD10,GVN12}, for the cell problem.  (We refer to \cite{MikelicPaoli} and 
references therein, in particular \cite{Mikelic95}, for a more detailed 
discussion. See also \cite{BMpreprint} for a recent result on stochastic
homogenization.) 
In the case of the Darcy-Brinkman system, the equations for 
the boundary corrector are linear and passage to the zero-viscosity limit is 
possible \cite{KTW11,HW14}.

It is therefore important to identify regimes where it is possible 
to neglect the effects of both the viscous boundary layers as well as the 
porosity of the medium and use the Euler equations in the full plane. In this 
article, we establish the convergence  of solutions to the two-dimensional
Navier-Stokes equations in the perforated domains to solutions of the Euler
equations in the full plane  in the energy norm,  when viscosity, size of
the particle, and particle distance vanish in the appropriate regime: namely,
particle
size must be smaller than particle distance, and a certain ratio of  particle
size to particle distance must be bounded above by viscosity. We
consider the most difficult and interesting case of no-slip boundary conditions 
for the Navier-Stokes problem. 
We confine ourselves to flows in the plane for technical reasons. However, our 
methods could be adapted to the more challenging study of 3D flows, at least
for the case of a fixed number of obstacles (see the related discussion in 
\cite{ILN09}).

The study of 2D incompressible flows in the exterior of  a 
single shrinking obstacle, both for viscous and inviscid fluids, was carried 
out in the series of papers \cite{IK09, ILN03,ILN06,ILN09} (see also
\cite{CPRXpreprint} for results in the periodic setting), where the case
of obstacles diametrically shrinking to a point was considered. This is the 
geometric set up in our paper as well.  

Even in the case of flows in the exterior of a finite number of 
shrinking obstacles at fixed positions, extending the results in 
\cite{ILN09} is not straightforward, as one needs to choose an approximation to 
the Euler solution  which is divergence free and  satisfy the no-slip condition at the boundary. For one 
obstacle, it is sufficient to truncate the stream function associated to the
Euler velocity. For more than one obstacle, we gain a good control on the norm
of the 
approximation by truncating the velocity and by constructing instead an 
appropriate corrector to restore the divergence-free condition. When the 
obstacles are kept at a fixed distance from each other, our result applies to 
arbitrary configurations of obstacles.

When the problem is studied in a porous medium, additional 
difficulties arise. In fact, the permeability of the medium in the limit depends
on the relation between particle distance and particle size. This relation
changes
drastically from the viscous to the inviscid case (cf.
\cite{Allaire90a,Allaire90b} 
for the viscous case, and \cite{BLM,LM,LionsMasmoudi,MikelicPaoli} for the inviscid case). 
Therefore, a careful analysis of the flow in the perforated domain is needed to 
pass to the limit, in particular with respect to the choice of initial data for 
the Navier-Stokes system. In the regime considered in this paper, we  take 
the initial data as well as the Euler correctors to be independent of
viscosity. We will expand on
this point further in Subsection~\ref{sec:domain}.

We close with an outline of the paper.
In the rest of the introduction we describe in detail the perforated domain and 
recall the fluid equations: the Navier-Stokes equations modeling viscous flows, 
and the Euler equations modeling inviscid flows.

We will compare the Navier-Stokes equations in the exterior of the obstacles, 
extended by zero, to the Euler solution in the whole plane. To this end, we 
will construct suitable approximations of the Euler solution in the exterior 
domain that satisfy the no-slip condition at the boundary and are still
divergence free.
In the case of one obstacle, the authors in \cite{ILN09} truncate the stream 
function, as both the stream function and the pressure are defined up to 
an arbitrary constant, and can then be chosen to be small on the obstacle. 
Smallness of the stream function and the pressure avoid a sub-optimal estimate 
on the gradient of the truncation. This construction does not extend to more 
than one obstacle. In Section~\ref{sec:prelim}, we present a different 
construction, which utilizes a direct cut-off function on the velocity and a
correction to restore the divergence-free condition.
This construction was introduced in \cite{Lac-NS3D} and is based on
Bogovski{\u\i} operator.
Section~\ref{sec:prelim} also contains the elliptic estimates that are needed
to establish our main convergence result. We also recall some
standard estimates for the Euler solutions.

In Section~\ref{sec:initial}, we discuss the convergence of the Navier-Stokes 
initial data to the Euler initial data under the conditions on 
$\ep$ (the particle size) and $d_\ep$ (the distance between particles)
given in Theorem~\ref{th:main1}, using the analysis in
\cite{BLM}. 

Finally, in Section~\ref{sec:energy}, we prove convergence of the Navier-Stokes 
solutions $\unuep$ to the Euler solution  $\uE$ by correcting the  
Euler solution thanks to  the results 
in Section~\ref{sec:prelim}. In 
fact, we prove a more general stability result--- see the statement of
Theorem~\ref{th:main}--- analogous to the main result in \cite{ILN09}. The
convergence
follows from this stability result and the convergence of the Navier-Stokes initial 
data to the Euler data, discussed in Section~\ref{sec:initial}.

Throughout the paper $\nabla$ will denote the gradient of a function with 
respect to its argument, while $C$ will denote a generic constant that may 
change from line to line. We will represent a point in $\RR^2$ by $x=(x_1,x_2)$ 
and denote the ball in $\RR^2$ (disk) with center $x$ and radius $R$ by 
$B(x,R)$.

\subsection*{Acknowledgments} 
A.~M. would like to acknowledge the hospitality and financial support of 
Universit\'e Paris-Diderot (Paris 7), Universit\'e Pierre et Marie Curie (Paris 
6), and the financial support of Paris City Hall and the Fondation Sciences
Math\'ematiques de Paris, to
conduct part of this work.

The authors are also grateful to the anonymous referee for his valuable comments on the first version of this article.

\subsection{The perforated domain} \label{sec:domain}

We begin by describing in detail the geometric setup. By a perforated domain we mean the complement of finitely-many inclusions arranged in a lattice.
In this article, we consider the case where all the inclusions have the same shape and the lattice is regular:
\begin{equation}\label{Kijeps}
\K_{i,j}^\varepsilon := z_{i,j}^\varepsilon + \varepsilon \K
\end{equation}
where
\begin{equation}\label{K}
\begin{aligned}
\K \text{ is a connected, simply connected compact set of }\RR^2,\\
 \K \subset [-1,1]^2 ,\quad \partial \K \in C^{1,1}\quad \text{ and }\quad  0\in \stackrel{\circ}{\K}.\end{aligned}
\end{equation}
The regularity of the lattice is needed only when the distance
between the inclusions vanishes. If a fixed number of shrinking obstacles is
considered instead, these can be arranged in an arbitrary fashion.

In \eqref{Kijeps}, $z_{i,j}^\varepsilon$ should be chosen such that the inclusions are disjoints. We will assume that the inclusions are at least separated by a distance $2d_{\varepsilon}$:
\begin{equation}\label{zij}\begin{split}
z_{i,j}^\varepsilon :=& (\varepsilon+2(i-1)(\varepsilon+d_{\varepsilon}),\varepsilon+2(j-1)(\varepsilon+d_{\varepsilon}))\\
=&(\varepsilon,\varepsilon)+2(\varepsilon+d_{\varepsilon}) (i-1,j-1),
\end{split}\end{equation}
see Figure \ref{fig.config}.
We are interested in the case for which $d_{\varepsilon}$ goes to zero as
$\varepsilon\to 0$,  hence the case for which the number of inclusions goes to
infinity.

In the horizontal direction, we distribute the inclusions on the unit segment, i.e. we consider
\[
i = 1,\dots, n_{1}^\varepsilon \quad \text{with}\quad  n_{1}^\varepsilon=N^{\varepsilon}:= \Big[ \frac{1+2d_{\varepsilon}}{2(\varepsilon+d_{\varepsilon})}\Big]
\]
where $[x]$ denotes the integer part of $x$. In the vertical direction, 
we characterize the geometry by introducing a parameter $0\leq \mu\leq 1$. The case $\mu=0$ corresponds to the case in which we only
have obstacles arranged on a line, whereas the case $\mu=1$ corresponds to the case 
in which we have obstacles arranged in a square lattice.
This configuration is achieved by considering  $[(N^{\varepsilon})^\mu]$
horizontal lines of obstacles, that is,
\[
j = 1,\dots, n_{2}^{\varepsilon}\quad \text{with} \quad n_{2}^\varepsilon := \Big[ (N^{\varepsilon})^\mu\Big].
\]
\begin{figure}[h!t]
\begin{center}
\includegraphics[height=6cm]{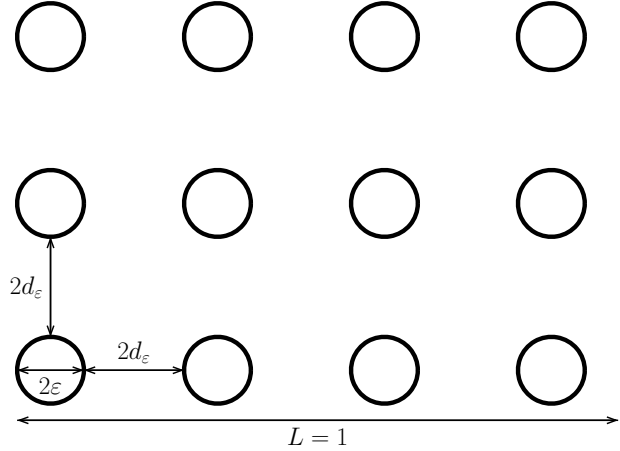}
\caption{Distribution of inclusions.}\label{fig.config}
\end{center}
\end{figure}
The intermediate case $0<\mu<1$ corresponds to a rectangular configuration of
inclusions, such that in the limit $\ep\to 0$ the density of particles becomes
infinite, but the height of the rectangle vanishes.
For simplicity, we carry out the analysis only in the case that all the
inclusions have the same shape $\K$, but similar results can be established when
obstacles of different shapes shrink homotetically to zero. When the number of
inclusions becomes infinite, we need to assume that the possible shapes are
finitely many, in order to have uniform constants in elliptic estimates (see
Subsection~\ref{sec:elliptic}).
The main reason for studying different arrangements of inclusions as 
$\varepsilon$ goes to zero is that the limiting equations may change depending 
on the geometric configuration (this is indeed the case for the viscous 
homogenization of the perforated domain \cite{Allaire90a,Allaire90b}).

Throughout the paper, the fluid domain will be the exterior of the obstacles:
\begin{equation}\label{OM}
\Omega^{\ep,\mu} = \Omega^\varepsilon:=\RR^2\ \setminus 
\Big(\bigcup_{i=1}^{n_{1}^\varepsilon} 
\bigcup_{j=1}^{n_{2}^\varepsilon} \K_{i,j}^\varepsilon\Big).
\end{equation} 
For notational convenience, we suppress the dependence of the fluid domain on 
$\mu$.

{\em The main purpose of this paper is to study the simultaneous limit of 
vanishing viscosity
$\nu$ and the limit  of vanishing $\varepsilon$ for the solutions of the
Navier-Stokes 
equations in the exterior domain $\Omega^\varepsilon$.
For different values of $\mu$, we
determine a relation between $\nu$, $\varepsilon$, and $d_\ep$ such that
the
Navier-Stokes solution in the perforated domain converges to the solution of the
Euler equations in the 
full plane.}

The asymptotic of fluids outside obstacles that shrink to points is a very
active area of research. For inviscid flows, Iftimie, Lopes Filho, and
Nussenzveig Lopes have treated in \cite{ILN03}  the case of one shrinking 
obstacle in the plane (extended to the case of one shrinking obstacle in a non 
simply-connected bounded domain by Lopes Filho in \cite{Lopes07}), whereas Lacave, 
Lopes-Filho, and Nussenzveig-Lopes have studied in \cite{LLL} the case of an 
infinite number of shrinking obstacles. These 
works consider also the case where the circulation of the initial velocity around $\K_{i,j}^\ep$ 
 is non zero, but do not provide a control on the distance between
the holes. In the case of zero circulation (see \eqref{u0eps}), Bonnaillie-No\"el, 
Lacave, and Masmoudi prove in \cite{BLM} that, when $d_\ep =\ep^\alpha$, solutions of the Euler
equations in the exterior domain $\Omega^\varepsilon$ defined in \eqref{OM}
converge to the Euler solution in the full plane if
$\alpha<1$ (more precisely, if $\alpha<2-\mu$).
Hence, in the limit the ideal fluid is not perturbed by the perforated domain
if $d_{\varepsilon} = \varepsilon^\alpha$ with $\alpha<1$. For larger $\alpha$, 
Lacave and Masmoudi establish in \cite{LM} that the perforated domain becomes 
impermeable (e.g. when $\mu=1$, an impermeable square appears if  $\alpha>1$). 
In the periodic setting   (related 
to the case $\mu=\alpha=1$ in the context of this paper), an homogenized limit 
for a modified Euler system was obtained in  
\cite{LionsMasmoudi,MikelicPaoli}.

Concerning viscous fluid with fixed viscosity $\nu$, Iftimie, Lopes-Filho, and
Nussenzveig-Lopes  considered again the case of one shrinking
inclusion in \cite{ILN06} (see also \cite{IK09} in dimension three). There is
also an extensive literature in the homogenization framework. Following the
pioneering work of 
Cioranescu and Murat \cite{CM82} for Laplace's equation, Allaire  studied the
homogenization of the Navier-Stokes equations in a perforated domain under 
different regimes, given in terms of relative ratios of $d_\ep$ to $\ep$, in
\cite{Allaire90a,Allaire90b}. When $\mu=1$ and  $d_{\varepsilon} = 1/\sqrt{|\ln 
\ep|}$, or when $\mu=0$ and  $d_{\varepsilon} = 1/ {|\ln \ep|}$  for 
the critical distance between inclusions,  the limit problem is 
described by a filtration law of Brinkman type. If the distance is larger than 
this critical value, one recovers the solution of the Navier-Stokes in the full 
plane without any influence of the porous medium, and if the distance is 
smaller, one obtains Darcy's law in the limit.
We refer to \cite{Allaire90b,BLM} and references therein for a more detailed 
discussion of this point, in particular for results on viscous flow through a 
sieve.
We remark that the dependence of the critical distance on $\ep$ needed to feel
the presence of the porous medium in the limit is markedly different in the 
viscous and inviscid cases. For example, for distances $d_\ep$ 
satisfying  $\varepsilon^\alpha \ll d_{\varepsilon} \ll 1/ \sqrt{|\ln \ep|}$, 
the limiting system describing the homogenized viscous flow is different than
the Navier-Stokes system, whereas there is no memory of the obstacles in the
limit for ideal flows. As a matter of fact, we are able to treat the situation in which the distance is smaller than
the critical distance (namely, $d_{\ep}^{1/2} \sqrt{|\ln
\varepsilon|} \to 0$) and the inclusions are distributed along a line, i.e., $\mu=0$ in our setting, albeit with a less than optimal rate of convergence.  For Navier-Stokes, this case was not addressed by Allaire in \cite{Allaire90b} (see Remark~\ref{rem:end}).

It is therefore natural to investigate the concurrent
limit $\ep,\nu\to 0$.
There are also important physical motivations for studying this problem, as
discussed in the introduction. (We again refer to \cite{MikelicPaoli} 
and \cite{ILN09} for a more in-depth discussion of the physical 
implications of these and our results.)

\subsection{Fluids equations and initial data}

We consider the flow of a viscous, incompressible, Newtonian fluid in 
$\Omega^\varepsilon$ with classical no-slip boundary conditions on
$\pa\Omega^\varepsilon:= \cup_{i,j}  \pa \K_{i,j}^\ep$.
We hence let $(\unuep, \pnuep)$ denote the solution of the Navier-Stokes 
equations with (kinematic) viscosity coefficient $\nu$ in $\Omega^\varepsilon$ 
and initial velocity $u_{0}^{\ep}$:
\begin{equation} \label{eq:NSE}
  \begin{cases}
       \unuep_t   - \nu \Delta \unuep +( \unuep\cdot \nabla) \unuep +\nabla 
\pnuep = 0, &
 \text{on } (0,+\infty)\times \Omega^\varepsilon, \\
  \dive \unuep =0, & \text{on } [0,+\infty)\times \Omega^\varepsilon,\\
\unuep  = 0, & \text{on } (0,+\infty) \times\pa\Omega^\varepsilon , \\
 \unuep(0,\cdot) =u_0^{\ep}, & \text{on } \Omega^\varepsilon.
  \end{cases}
\end{equation}
We take initial data depending only on $\ep$ and not directly on viscosity
$\nu$ for reasons that will be clear below. However, this set up can be easily 
relaxed. The initial data is assumed only to  satisfy the no-penetration
conditions forced by the impermeable boundaries.
By abuse of notation, we will often refer to the velocity $\unuep$ as the 
Navier-Stokes solution, without mentioning the pressure $\pnuep$.

We will also consider inviscid flow in the whole plane $\RR^2$, and denote by 
$\uE$ the solutions of the Euler equations with initial data $u_0$:
\begin{equation} \label{eq:Euler}
  \begin{cases}
       \uE_t   + (\uE\cdot \nabla) \uE +\nabla \pE = 0, &
 \text{on } (0,+\infty)\times \R^2, \\
  \dive \uE =0, & \text{on } [0,+\infty)\times  \R^2,\\
 \uE(0,\cdot) =u_0, & \text{on }  \R^2.
  \end{cases}
\end{equation}

Our main result is a convergence result of $\unuep$ to $\uE$ under  suitable conditions on the relative strengths of $\nu$, $\varepsilon$, 
and $d_\varepsilon$, assuming convergence of the respective initial data. 
Such a convergence can be achieved if $u_0^\ep$ is chosen to be an appropriate 
truncation of the Euler initial velocity $u_0$ that is tangent to the
boundary of $\Omega^\ep$ and divergence free. $u_0^\ep$ will be close to $u_0$ 
owing to the fact that the size of the obstacles is small.
A standard approach to constructing the truncated velocity is to give the 
initial Euler velocity $u_0$ in term of a fixed initial vorticity 
independent of the domain (as e.g. in 
\cite{BLM,IK09,ILN03,ILN06,ILN09,Lac-NS3D,LLL,LM}). More precisely, let  
$\omega_{0}$ be a smooth function compactly supported outside the porous medium (i.e. compactly supported in $\R^2 \setminus ([0,1]\times \{ 0\})$ if $\mu \in [0,1)$ and in $\R^2 \setminus ([0,1]^2)$ if $\mu =1$), and define 
$u_0$ by
\begin{equation}\label{u0}
u_{0}(x) = K_{\R^2}[\omega_{0}](x):= \frac1{2\pi}\int_{\R^2} 
\frac{(x-y)^\perp}{|x-y|^2} \omega_{0}(y)\, dy,
\end{equation}
where $K_{\RR^2}$ denotes the Biot-Savart operator in the full plane.
That is, $u_0$ 
is the unique solution of the problem
\begin{equation}\label{eq:u0}
\div u_{0} =0 \text{ in } \R^2,\quad \curl u_{0} =\omega_{0} \text{ in } 
\R^2,\quad \lim_{x\to \infty} |u_{0}(x)|=0.
\end{equation}
It is well known (see e.g. \cite{MajdaBertozzi}) that, with this initial data, 
the Euler system~\eqref{eq:Euler} has a unique solution $\uE$.  The properties
of such a solution will be recalled in Subsection~\ref{sec:Euler}

\begin{remark}
The condition that $\omega_0$ be supported outside of the obstacles is used
mainly in proving convergence of the initial velocities in $L^2$ in
Section \ref{sec:initial}, which is then utilized in the energy estimates of
Section \ref{sec:energy}. In fact, to show that $u^{\nu,\ep}_0-u_0$ belongs to
$L^2(\Omega^\ep)$ one needs
\[
   \int_{\Omega^\ep} \omega_0\, dx= \int_{\RR^2} \omega_0\, dx,
\]
and the condition on the support of $\omega_0$ guarantees that this condition
holds uniformly in $\ep$. By comparison, in the different context of flow in a
half-plane, Maekawa \cite{Mae} recently proved that the
vanishing vioscosity limit holds, if the initial
vorticity is (compactly) supported away from the boundary, for a time that is at
least comparable to the distance of the support of vorticity from the boundary.
Note, however, that in Theorem \ref{th:main1}, the vanishing viscosity limit is
established for arbitrarily long intervals of time over which we cannot exclude
that the support of the Euler vorticity intersects the obstacles.
\end{remark}

For any $\varepsilon>0$, there exists a unique vector field $u_{0}^\varepsilon$
defined on $\Omega^\varepsilon$ verifying:
\begin{equation}\label{u0eps}\begin{split}
\div u_{0}^\varepsilon =0 \text{ in } \Omega^\varepsilon,\quad \curl 
u_{0}^\varepsilon =\omega_{0} \text{ in } \Omega^\varepsilon, \quad  
u_{0}^\varepsilon \cdot n =0 \text{ on } \partial \Omega^\varepsilon, \\
  \int_{\partial \K_{i,j}^\varepsilon}u_{0}^\varepsilon\cdot \tau\, ds =0 \text{ 
for all } i,j,\quad \lim_{x\to \infty} |u_{0}^\varepsilon(x)|=0,
\end{split}\end{equation}
see \cite{Kikuchi}. The convergence of $u_0^\ep$ to $u_0$ will be 
established in Section~\ref{sec:initial} using the methods of \cite{BLM}.
As we will show in \eqref{u0ep regularity}, the vector field 
$u_{0}^\varepsilon$ constructed in \eqref{u0eps} belongs to $L^\infty\cap 
L^{2,\infty}(\Omega^\varepsilon)$. Hence, we can apply the result of Kozono and 
Yamazaki \cite[Theo. 4]{KY95} to conclude that the problem \eqref{eq:NSE} has a
unique, global-in-time, strong solution. 

We close by observing that neither the Euler nor the Navier-Stokes solution 
have finite energy, nevertheless we will be able to show convergence, as $\nu$,
$\ep$ vanish,  of $\unuep -\uE$ to zero in the energy norm.

\subsection{Main result} 

In this work, we are interested in studying the limit of vanishing viscosity
$\nu\to 0$ at
the same time as the limit $\ep\to 0$, in which the obstacles shrink to points.
In the case of a single obstacle,  it was proved in \cite{ILN09} that, for any 
given $0<T<\infty$, there exists a constant 
$C_{1}=C_{1}(\K,u_{0},T)$ such that, under the conditions $\ep \leq C_{1}\nu$,  
$\unuep$ converges to $u^E$ in the energy space $L^\infty((0,T),L^2(\RR^2))$. In 
this result and in our main theorem, $\unuep$  and $u_0^\ep$ are extended by 
zero to the whole plane. Unfortunately, the arguments in \cite{ILN09} use in a 
crucial way the fact that we have only one obstacle: namely, the authors utilize
the
stream function $\psi^E$ and choose a pressure $\pE$ for the Euler system that
is zero at the center of $\K$, in order to construct a corrector for the
Navier-Stokes solution in the fluid domain via truncation. Taking $\psi^E$ and 
$p^E$ small as the obstacle shrinks is needed to balance the growth of 
gradients of the cut-off function. Such a choice is not possible in the case of 
more than one obstacle, as there is only one degree of freedom for the Euler 
pressure and stream function in the whole plane. Inspired from a method 
developed in \cite{Lac-NS3D}, we will instead directly truncate the velocity 
field with good estimates on the truncation, which will allow to adapt the 
energy argument of \cite{ILN09} to establish the zero-viscosity limit. 

Due to the inherent difficulty with the vanishing viscosity in bounded domains, 
we are only treating the case where $d_{\varepsilon}\geq \varepsilon$ (without 
loss of generality we can assume that $d_\ep\leq 1$), hence the 
total number of inclusions satisfies 
\begin{equation}\label{n1n2}
 n_{1}^\varepsilon n_{2}^\varepsilon \leq \frac1{d_{\varepsilon}^{1+\mu}}.
\end{equation}

Our main result is the following theorem.

\begin{theorem}\label{th:main1}
Given $\omega_{0}\in C^\infty_{c}(\R^2)$,  let $\uE$ the solution of the Euler 
equations \eqref{eq:Euler} in the whole plane with initial condition $u_{0}$ 
(given in terms of $\omega_{0}$ in \eqref{u0}). For any $\ep,\nu>0$, let $d^\ep 
\geq \ep$ and let $\Omega^\ep$ be defined in \eqref{Kijeps}-\eqref{OM}. 
Let $\unuep$ be the solution of  the Navier-Stokes equations \eqref{eq:NSE} in 
$\Omega^\ep$ with initial velocity $u_{0}^{\ep}$, given by the unique solution 
of \eqref{u0eps}).
Then, there exists a constant $A$ depending only on  $\K$ such that if
$$\displaystyle \frac{\ep}{d_{\varepsilon}^{(1+\mu)/2}} \leq \frac{A \nu}{ \| \omega_{0}\|_{L^1\cap L^\infty(\R^2)}},$$
and if $\omega_{0}$ is supported in $\Omega^\varepsilon$, then for any $T>0$ we have
\begin{equation} \label{ineq.main}
         \sup_{0\leq t\leq T}  
  \|\unuep-\uE\|_{L^2(\Omega^\ep)}\leq B_{T} 
\frac{\sqrt\nu}{d_{\ep}^{(1+\mu)/2}},
\end{equation}
where $B_{T}$ is a constant depending only on $T$, $\|\omega_{0}\|_{L^1\cap
W^{1,\infty}(\R^2)}$,  and $\K$.
\end{theorem}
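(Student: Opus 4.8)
The plan is to establish the result via an energy estimate comparing $\unuep$ to a suitable correction of $\uE$ in the fluid domain $\Omega^\ep$, following the strategy of \cite{ILN09} but replacing the stream-function truncation by a velocity truncation plus a Bogovski{\u\i} corrector. First I would construct, using the elliptic estimates recalled in Section~\ref{sec:prelim}, an approximation $\uep$ of $\uE$ defined on $\Omega^\ep$ which is divergence free, vanishes on $\pa\Omega^\ep$, and agrees with $\uE$ away from a thin neighborhood of the obstacles: take a cut-off $\phiep$ equal to $0$ on the obstacles and $1$ outside $\ep$-collars, set $\phiep \uE$, and add a corrector $W^\ep = \mathbb{B}[\nabla\phiep\cdot\uE]$ given by the Bogovski{\u\i} operator on each collar annulus so that $\div(\phiep\uE + W^\ep)=0$. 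The key quantitative input is that $\|\nabla\phiep\|_{L^\infty}\lesssim 1/\ep$ on each annulus of area $\sim\ep^2$ (or $\sim \ep d_\ep$), there are $n_1^\ep n_2^\ep \leq d_\ep^{-(1+\mu)}$ of them, and $\uE$ is bounded in terms of $\|\omega_0\|_{L^1\cap W^{1,\infty}}$; summing gives $\|W^\ep\|_{L^2}+\|\uep-\uE\|_{L^2}\lesssim \ep/d_\ep^{(1+\mu)/2}$ and $\|\nabla W^\ep\|_{L^2}\lesssim 1/d_\ep^{(1+\mu)/2}$ uniformly in time, with time-dependence only through $\|\omega_0\|$-type norms of $\uE$ (Section~\ref{sec:Euler}).

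Next I would write the equation satisfied by the difference $w := \unuep - \uep$: it solves a perturbed Navier-Stokes system with forcing coming from (i) $\uep$ not solving Navier-Stokes exactly — namely $-\nu\Delta\uep$, the commutator terms from the cut-off and corrector in the transport term, and $\pa_t W^\ep$ — and (ii) the Euler pressure term, which here is genuinely present on $\Omega^\ep$ (no freedom to center it). I would test this equation against $w$, which is legitimate since $w$ vanishes on $\pa\Omega^\ep$ and decays at infinity. The transport term $\int (\unuep\cdot\nabla)w\cdot w = 0$ by incompressibility and the boundary condition; the term $\int (w\cdot\nabla)\uep\cdot w$ is bounded by $\|\nabla\uep\|_{L^\infty}\|w\|_{L^2}^2$ plus contributions on the collars handled by Hölder and the $L^2$ bounds on $\uep-\uE$; the viscous term gives $+\nu\|\nabla w\|_{L^2}^2$ on the left and a right-hand side $\nu\int\nabla\uep:\nabla w \leq \tfrac{\nu}{2}\|\nabla w\|_{L^2}^2 + C\nu\|\nabla\uep\|_{L^2}^2$, absorbing half the good viscous term; and the forcing terms are estimated by Cauchy–Schwarz, producing a right-hand side of the form $C(t)\|w\|_{L^2}^2 + R_\ep(t)$ with $R_\ep(t) \lesssim \nu/d_\ep^{(1+\mu)/2}$ after using the bounds above (this is where the hypothesis $\ep/d_\ep^{(1+\mu)/2}\leq A\nu/\|\omega_0\|$ enters, to control $\|\pa_t W^\ep\|$ and the commutator terms against $\nu$). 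Grönwall then gives $\|w(t)\|_{L^2}^2 \lesssim \big(\|w(0)\|_{L^2}^2 + \nu/d_\ep^{(1+\mu)/2}\big)e^{\int_0^t C}$, and combining with $\|u_0^\ep - u_0\|_{L^2(\Omega^\ep)} = O(\ep/d_\ep^{(1+\mu)/2}) = O(\sqrt\nu\cdot\ep/d_\ep^{(1+\mu)/2}\cdot\nu^{-1/2})$ — in fact controlled by $\sqrt\nu/d_\ep^{(1+\mu)/2}$ under the hypothesis — together with the triangle inequality $\|\unuep-\uE\| \leq \|w\| + \|\uep-\uE\|$ yields \eqref{ineq.main}.

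The main obstacle, and the reason the rate degrades as $d_\ep\to 0$, is controlling the viscous term $\nu\Delta\uep$ and the corrector's time derivative: $\|\nabla\uep\|_{L^2}$ is only $O(1/d_\ep^{(1+\mu)/2})$ because of the accumulation over $\sim d_\ep^{-(1+\mu)}$ collars each contributing an $O(1)$ Dirichlet energy, so $\nu\|\nabla\uep\|_{L^2}^2 = O(\nu/d_\ep^{(1+\mu)})$, which after Grönwall and taking square roots is exactly the claimed $\sqrt\nu/d_\ep^{(1+\mu)/2}$; making this sharp requires the good Bogovski{\u\i} estimates on annuli with the right scaling (uniform in $\ep$, which is why only finitely many obstacle shapes with $C^{1,1}$ boundary are allowed) and careful bookkeeping of which error terms are $O(\ep/d_\ep^{(1+\mu)/2})$ versus $O(1/d_\ep^{(1+\mu)/2})$ so that the hypothesis relating $\ep/d_\ep^{(1+\mu)/2}$ to $\nu$ is precisely what is needed to close the estimate. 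A secondary technical point is justifying the energy identity itself given that neither solution has finite energy — this is handled as in \cite{ILN09} by noting the \emph{difference} is in $L^2$ and using the decay at infinity.
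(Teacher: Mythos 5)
Your proposal follows essentially the same route as the paper: the divergence-free corrector $\uep=\phiep \uE - \hep$ built from the Bogovski{\u\i} operator on the $\ep$-collars (Proposition~\ref{prop:uepest}), an energy estimate on $\unuep-\uep$ in which the hypothesis $\ep/d_\ep^{(1+\mu)/2}\leq A\nu/\|\omega_0\|_{L^1\cap L^\infty}$ is used to absorb the collar contributions into the viscous dissipation, Gr\"onwall, and the initial-data convergence of Proposition~\ref{prop:u0eps}; your identification of $\nu\|\nabla \uep\|_{L^2}^2=O(\nu/d_\ep^{1+\mu})$ as the source of the degraded rate is exactly the paper's Remark~\ref{rem:end}. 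The only caution is that the bulk transport term must be bounded via $\|\nabla \uE\|_{L^\infty}$ rather than $\|\nabla \uep\|_{L^\infty}$ (which blows up like $1/\ep$), and the collar pieces require the $\ep$-scaled Poincar\'e and Sobolev inequalities of Lemmas~\ref{lem:poincare}--\ref{lem:ladyz} rather than a bare H\"older bound --- both points your sketch accommodates only implicitly.
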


\begin{remark}
\begin{enumerate}
\renewcommand{\theenumi}{\roman{enumi}}
\renewcommand{\labelenumi}{\theenumi)}
\item In the case of a fixed number of obstacles shrinking homotetically to 
points (arbitrary located), the above theorem is a direct extension of the results in \cite{ILN09}, 
namely, under the condition $\ep\leq \tilde A \nu$ (with $\tilde A$ depending on the distance  and the number of  obstacles and on $\| \omega_{0}\|_{L^1\cap L^\infty(\R^2)}$) we obtain the convergence of the Navier-Stokes solution to the Euler 
solution with a rate of convergence of order $\sqrt\nu$.
\item The fact that our Euler correctors (see Proposition~\ref{prop:uepest}) 
are $\ep$-dependent, but not $\nu$-dependent, 
can be seen as a consequence of the fact that the local Reynolds number, built 
in terms of the characteristic size of the obstacles, stays of order one as 
$\nu$ and $\ep$ go to zero in the regime of Theorem~\ref{th:main1}. In this 
situation, the boundary layers are negligible, as already observed in 
\cite{ILN09}. In fact, the proof of Theorem~\ref{th:main1} is similar to the 
proof of Kato's criterion, which implies the vanishing viscosity limit 
\cite{Kato84,TW98,W01}.
\item The additional factor $\frac{1}{d_{\ep}^{(1+\mu)/2}}$ 
in \eqref{ineq.main} can be viewed as an effect of the homogenization of the
porous medium. Using standard correctors as for the Laplace problem (see
\cite{Allaire90a,Allaire90b,CM82,Tartar}), one would obtain a smaller bound 
of the form $\frac{1}{d_{\ep}^{(1+\mu)/2} \sqrt{|\ln\varepsilon|}}$. The limit
of $d_{\ep}^{(1+\mu)/2} \sqrt{|\ln\varepsilon|}$ determines precisely when the
so-called ``strange term'' in the homogenization appears. However, this
improvement is less relevant from an application standpoint, as discussed
next, and would require a significant amount of additional technical work
(cf. the discussion in Remark~\ref{rem:end}).
\item Refined
consequences of our main result arise by considering
specific regimes for the ratio of particle distance to particle size, as
discussed at
the end of Subsection \ref{sec:domain}. For instance, when $\mu =1$ (inclusions
are
uniformly distributed in a square) and $d_{\ep} = 1/\sqrt{|\ln \ep|}$, the
critical distance for the homogenization problem, if the viscosity $\nu$
satisfies \ $C \ep \sqrt{|\ln \ep|} \leq \nu \ll 1/|\ln \ep|$ --- for example,
$\nu = \ep^\alpha$ for some $\alpha\in (0,1)$ --- our result gives convergence
$\unuep \to \uE$, whereas if viscosity is kept fixed as $\ep \to 0$ $\unuep$
does not converge to the Navier-Stokes solutions in the full plane, as recalled
already. Similar conclusions can be drawn for smaller inter-particle distances,
namely the theorem implies the convergence to $u^E$ if $\ep\ll d_{\ep}^3$, as in the case $d_{\ep} =
\ep^\alpha$ with $\alpha< 1/3$. When $\mu =0$, the critical distance between
inclusions for the homogenization problem \cite{Allaire90b}
 is  $d_{\ep} = 1/|\ln \ep|$, whereas in our result the limit holds when $\ep\ll
d_{\ep}^{3/2}$, as for the case $d_{\ep} =
\ep^\alpha$ with $\alpha< 2/3$.
 \end{enumerate}
\end{remark}

\section{ Preliminaries} \label{sec:prelim}

We begin by presenting some basic elliptic estimates and estimates on the Euler 
solution that will be used throughout the article.

\subsection{Basic elliptic estimates}\label{sec:elliptic}

For the energy estimate in Section~\ref{sec:energy}, we will need to 
approximate a divergence-free vector field $u$ defined in $\R^2$ by divergence 
vector fields $u^\varepsilon$ verifying the Dirichlet boundary condition on 
$\partial \Omega^\varepsilon$.  To this end, we introduce an appropriate 
cut-off function $\phiep$ as follows.

As we consider the case where $d_{\varepsilon}\geq \varepsilon$, we deduce from 
\eqref{Kijeps}-\eqref{zij} that
\[
\K_{i,j}^\varepsilon \subset z_{i,j}^\varepsilon +\ep [-1,1]^2 \quad \forall 
(i,j)
\]
and that
\begin{equation}\label{U disjoint}
\Big(z_{i,j}^\varepsilon + \ep(-2,2)^2\Big)\bigcap \Big(z_{p,q}^\varepsilon + 
\ep(-2,2)^2\Big) = \emptyset \quad \forall (i,j) \neq (p,q).
\end{equation}

We let $\phi\geq 0$ be a smooth,  cut-off function such that:
\begin{equation}\label{eq:phi}
    \begin{cases}
         \phi(x) \equiv 1, &  |x|_{\infty} \leq 3/2, \\
         \phi(x) \equiv 0, & |x|_{\infty} \geq 2, \quad   
    \end{cases}
\end{equation}
where $|x|_\infty = \max_{i} |x_i|$,
and set 
\begin{equation}\label{phi ep}
\phi^{\varepsilon}(x) = 1- \sum_{i=1}^{n_{1}^\varepsilon} 
\sum_{j=1}^{n_{2}^\varepsilon}  \phi\left(\frac{x-z_{i,j}^\ep}\ep \right), 
\qquad x\in \RR^2.
\end{equation}
Then, $0\leq \phi^\ep \leq 1$ in $\R^2$,  $ \phi^\ep\equiv 1 $ away from the 
obstacles and $\phi^\ep\equiv 0$ on a small neighborhood of the obstacles. More 
precisely, denoting
\begin{equation}\label{def Aep}
      \Aep  :=\bigcup_{i,j} \Big(z_{i,j}^\varepsilon + \ep(-2,2)^2\setminus 
\ep \K \Big),
\end{equation}
we observe that  $\supp (1- \phi^\ep) \subset \Aep$. As we have 
the bound \eqref{n1n2} on the number of obstacles $n_{1}^\ep
n_{2}^\ep$,
we can easily estimate the Lebesgue measure 
of $\Aep$ and conclude that
\begin{equation}   \label{eq:phiepest} 
 \begin{aligned}
 \|1- \phi^\ep\|_{L^p} + \ep \|\nabla \phi^\ep\|_{L^p}  \leq C_{p} 
\frac{\ep^{2/p}}{d_{\varepsilon}^{(1+\mu)/p}}, 
  \qquad 1\leq p\leq \infty,  
 \end{aligned}
\end{equation}
with $C_{p}$  a constant independent of $\varepsilon$.

As $\phi^{\ep} u$ is not divergence free, the stream function $\psi$, 
satisfying $u=\nabla^\perp \psi$, can be truncated instead as in 
\cite{IK09,ILN03,ILN06,ILN09}, that is, one sets $u^\varepsilon:=\nabla^\perp 
(\phi^{\ep} \psi)$, which is divergence free and vanishes at the boundary.
In this case, one needs to compensate the growth of the gradient of 
$\phi^{\varepsilon}$ as the obstacles shrink, which is of order $1/\ep$.
For a single inclusion shrinking to a point, which we identify with the origin, 
one can choose $\psi$ such that $\psi(0)=0$ and then $u^\varepsilon = \phi^{\ep} 
u + \psi \nabla^\perp \phi^{\varepsilon}=\mathcal{O}(1)$. Of course, such a 
procedure cannot be applied if we consider more than one obstacle or if the 
obstacle is shrinking to a curve instead to a point.
Here we present an alternate way to truncate divergence-free vector fields 
so that they have support in the complement of several disjoint obstacles. The 
following method was used in \cite{Lac-NS3D} to treat the case of an obstacle 
shrinking to a curve and it is related to the Bogovski{\u\i} operator (see 
\cite{Galdi94}).

For $1\leq p<\infty$ and for any $\ep>0$ fixed, we define an equivalent norm 
$\| \cdot \|_{W^{1,p}_{\ep}}$ in the Sobolev space $W^{1,p}$ by
\begin{equation} \label{eq:W1pep}
\| f \|_{W^{1,p}_{\ep}} := \left(\frac1{\ep^p} \| f \|_{L^{p}(\Aep)}^p + \| 
\nabla f \|_{L^{p}(\Aep)}^p\right)^{1/p}.
\end{equation}

\begin{lemma}\label{lem:hep} Given $1<p <\infty$, there exists a 
constant  $\widetilde{C}_{p}$ depending only on $p$, 
such that the following holds: given any divergence-free vector field 
$u(t,\cdot)\in L^\infty(\R^2)$ for all $t\in [0, \infty)$ and 
given any $\ep>0$,
the problem
 \begin{equation*}
      \dive \hep = \nabla \phi^\ep\cdot u, \quad
       \text{on }  [0,\infty) \times \Aep, \\
\end{equation*} 
has a solution $h^\varepsilon(t,\cdot)\in 
W_{0}^{1,p}(\Aep)$ satisfying
\begin{gather*}
 \|\hep(t,\cdot) \|_{W^{1,p}_{\ep}}  \leq  \widetilde{C}_{p} \|\nabla \phiep\cdot u(t,\cdot)\|_{L^{p}(\Aep)} \quad \forall t\in [0,\infty),\\
  \|\hep(t,\cdot)-\hep(s,\cdot)  \|_{W^{1,p}_{\ep}} \leq  \widetilde{C}_{p} \|\nabla \phiep\cdot (u(t,\cdot)-u(s,\cdot) )\|_{L^{p}(\Aep)}\quad  \forall t,s\in [0,\infty).
\end{gather*}
Moreover, if $\frac{\partial u(t,\cdot)}{\partial t}\in L^{p}(\Aep)$ then
\[
\left\|\frac{\partial \hep(t,\cdot)}{\partial t} \right\|_{W^{1,p}_{\ep}}  \leq   \widetilde{C}_{p} \left\|\nabla \phiep\cdot \frac{\partial u(t,\cdot)}{\partial t} \right\|_{L^{p}(\Aep)}.
\]
\end{lemma}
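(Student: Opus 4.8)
The plan is to reduce the statement to a standard property of the Bogovski\u\i\ operator on a bounded Lipschitz (indeed $C^{1,1}$) domain, applied on the rescaled cell, and then to unfold the scaling. First I would observe that $\Aep$ is a disjoint union over $(i,j)$ of translated copies of the fixed reference domain
\[
\mathcal{A} := (-2,2)^2 \setminus \K,
\]
scaled by $\ep$; this uses \eqref{U disjoint} and the inclusion $\K_{i,j}^\ep \subset z_{i,j}^\ep + \ep[-1,1]^2$. On each connected component the equation $\dive h^\ep = \nabla\phi^\ep\cdot u$ decouples, so it suffices to solve it componentwise. The right-hand side $g := \nabla\phi^\ep\cdot u$ has the right mean value: $\phi^\ep$ is identically $1$ near the inner boundary $\ep\pa\K$ (since $\phi(x)\equiv 1$ for $|x|_\infty\le 3/2$) and identically $0$ near the outer boundary $z_{i,j}^\ep + \ep\pa(-2,2)^2$ (since $\phi(x)\equiv 0$ for $|x|_\infty\ge 2$), so on each component $\dive(\phi^\ep u)$ integrates to $\int_{\pa}(\phi^\ep u)\cdot n = 0$, hence $\int g = \int \nabla\phi^\ep\cdot u = -\int \phi^\ep\,\dive u + \int\dive(\phi^\ep u) = 0$ using $\dive u = 0$. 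This is precisely the compatibility condition needed to invoke Bogovski\u\i.

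Next I would set up the scaling explicitly. Fix a component indexed by $(i,j)$, write $U_{i,j}^\ep := z_{i,j}^\ep + \ep\mathcal{A}$, and for $y\in\mathcal{A}$ put $\tilde g(y) := \ep\, g(z_{i,j}^\ep + \ep y)$ (the factor $\ep$ so that $\int_{\mathcal A}\tilde g\, dy = \ep^{-1}\int_{U_{i,j}^\ep} g\, dx = 0$). Let $\mathcal{B}$ denote the Bogovski\u\i\ operator on the fixed bounded $C^{1,1}$ domain $\mathcal{A}$, which (see \cite{Galdi94}) for $1<p<\infty$ maps $L^p_0(\mathcal{A})$ boundedly into $W^{1,p}_0(\mathcal{A};\RR^2)$ with $\dive(\mathcal{B}\tilde g) = \tilde g$ and $\|\nabla\mathcal{B}\tilde g\|_{L^p(\mathcal{A})}\le C_p\|\tilde g\|_{L^p(\mathcal{A})}$ for a constant $C_p = C_p(\mathcal{A})$ depending only on $p$ and the fixed shape $\K$. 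Then I would define, on $U_{i,j}^\ep$,
\[
h^\ep(x) := (\mathcal{B}\tilde g)\!\left(\frac{x - z_{i,j}^\ep}{\ep}\right),
\]
and patch together over $(i,j)$, extending by $0$ outside $\Aep$. A direct chain-rule computation gives $\dive h^\ep(x) = \ep^{-1}(\dive\mathcal{B}\tilde g)((x-z_{i,j}^\ep)/\ep) = \ep^{-1}\tilde g((x-z_{i,j}^\ep)/\ep) = g(x)$, and $h^\ep\in W^{1,p}_0(\Aep)$ since $\mathcal{B}\tilde g\in W^{1,p}_0(\mathcal{A})$ and the pieces vanish on the outer boundaries of the cells.

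Then I would track the norms under scaling. Changing variables, for any function $f$ supported in $U_{i,j}^\ep$ with $\tilde f(y) = f(z_{i,j}^\ep + \ep y)$ one has $\|f\|_{L^p(U_{i,j}^\ep)}^p = \ep^2\|\tilde f\|_{L^p(\mathcal{A})}^p$ and $\|\nabla f\|_{L^p(U_{i,j}^\ep)}^p = \ep^{2-p}\|\nabla\tilde f\|_{L^p(\mathcal{A})}^p$. Applying this to $f = h^\ep$ and to $f = g$ (noting $\tilde g = \ep\, g(z_{i,j}^\ep+\ep\,\cdot)$ so $\|\tilde g\|_{L^p(\mathcal{A})}^p = \ep^{p-2}\|g\|_{L^p(U_{i,j}^\ep)}^p$), the Bogovski\u\i\ bound $\|\nabla\mathcal{B}\tilde g\|_{L^p(\mathcal{A})}\le C_p\|\tilde g\|_{L^p(\mathcal{A})}$ becomes, after multiplying by $\ep^{(2-p)/p}$, the estimate $\|\nabla h^\ep\|_{L^p(U_{i,j}^\ep)}\le C_p\|g\|_{L^p(U_{i,j}^\ep)}$; likewise the Poincar\'e inequality on $\mathcal{A}$ (valid since $\mathcal{B}\tilde g$ vanishes on the outer part of $\pa\mathcal{A}$, which has positive measure) gives $\ep^{-1}\|h^\ep\|_{L^p(U_{i,j}^\ep)}\le C_p\|g\|_{L^p(U_{i,j}^\ep)}$. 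Summing the $p$-th powers over all $(i,j)$ — the components being disjoint — yields exactly $\|h^\ep\|_{W^{1,p}_\ep(\Aep)}\le \widetilde C_p\|\nabla\phi^\ep\cdot u\|_{L^p(\Aep)}$ with $\widetilde C_p$ depending only on $p$ (and on the fixed shape $\K$, which is why the finitely-many-shapes hypothesis appears). The two remaining inequalities follow by linearity: since the construction $g\mapsto h^\ep$ is linear and $\ep$ is fixed, applying the above bound to $u(t,\cdot)-u(s,\cdot)$ in place of $u$, and to $\pa_t u(t,\cdot)$ in place of $u$ (using $\dive\pa_t u = \pa_t\dive u = 0$ and $\pa_t h^\ep = $ the Bogovski\u\i\ solution with data $\nabla\phi^\ep\cdot\pa_t u$, by uniqueness of the linear construction), gives the incremental and time-derivative estimates.

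The only genuinely delicate point is making sure the constant $\widetilde C_p$ is truly independent of $\ep$. This is where the rescaling to the fixed reference cell $\mathcal{A}$ is essential: all the operator norms (Bogovski\u\i, Poincar\'e) are taken on $\mathcal{A}$, which does not depend on $\ep$, and the $\ep$-powers conspire exactly so that the weighted norm $\|\cdot\|_{W^{1,p}_\ep}$ defined in \eqref{eq:W1pep} is the scale-invariant combination. A secondary point worth spelling out is the verification that the mean-zero condition for $\tilde g$ holds on \emph{each} component separately — this is immediate from the support properties of $\phi$ in \eqref{eq:phi} together with $\dive u = 0$, but it must be checked to license the componentwise use of Bogovski\u\i. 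Everything else is routine change-of-variables bookkeeping, so I would state it compactly rather than grinding through it.
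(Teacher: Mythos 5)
Your argument is correct and is essentially the paper's own proof: rescale each cell to the fixed reference domain $U=(-2,2)^2\setminus\K$, verify the mean-zero compatibility from $\dive u=0$ and the constancy of $\phi^\ep$ near each boundary component, apply the Bogovski\u\i\ operator there with a constant depending only on $p$ and $\K$, and observe that the $\ep$-weights in $\|\cdot\|_{W^{1,p}_\ep}$ make the estimate scale-invariant, the remaining two bounds following by linearity. One cosmetic slip: you have the values of $\phi^\ep$ reversed (it is $\equiv 0$ near $\pa\K_{i,j}^\ep$ and $\equiv 1$ near the outer boundary $z_{i,j}^\ep+\ep\,\pa(-2,2)^2$, since $\phi^\ep=1-\sum\phi(\cdot)$), but the mean-zero conclusion is unaffected because $u$ is divergence free on all of $\R^2$.
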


Later on in Proposition \ref{prop:uepest}, we will employ this Lemma to
construct a suitable corrector to the Euler solution that is supported away
from the obstacles. Informally,
extending $h^\ep$ by zero, we readily verify that  $u^\ep := \phi^\ep\, u -\hep$
is 
divergence free, agrees with $u$ away from the obstacles, and the $L^{p}$ 
norm of $u^\ep$ is of order $\|u\|_{L^p}+\ep \| \nabla \phiep \|_{L^p} \|u\|_{L^\infty}$ (cutting the stream function $\psi$ would 
instead give a bound of order $\|u\|_{L^p}+ \| \nabla \phiep \|_{L^p} \|\psi\|_{L^\infty}$).

\begin{proof}[Proof of Lemma~\ref{lem:hep}] 
\ We start by introducing the 
bounded open set:
\begin{equation}\label{def U}
U:= (-2,2)^2\setminus \K.
\end{equation}
As $U$ satisfies the cone condition (see e.g. \cite[Rem. 
III.3.4]{Galdi94} for the definition), Theorem III.3.1 and 
Exercise III.3.6 (solvable thanks to Remark III.3.3) in \cite{Galdi94} state 
that there exists $\widetilde{C}_{p}$ with the following property. 
Given  any function $f(t,x)$ such that $f(t,\cdot)\in L^{p}(U)$, and  $\int_{U} 
f(t,\cdot)=0$ for all $t$, there exists a solution 
$h(t,\cdot)\in W_{0}^{1,p}(U)$ of the problem:
 \begin{equation}\label{eq div}
      \dive h = f, \quad
       \text{on }  [0,\infty) \times U, \\
\end{equation} 
such that 
\begin{gather}
 \|h(t,\cdot) \|_{W^{1,p}(U)}  \leq  \widetilde{C}_{p} \|f(t,\cdot)\|_{L^{p}(U)} \quad \forall t\in [0,\infty), \label{h1}\\
 \|h(t,\cdot)-h(s,\cdot)  \|_{W^{1,p}(U)} \leq  \widetilde{C}_{p} \|f(t,\cdot)-f(s,\cdot) \|_{L^{p}(U)}\   \forall t,s\in [0,\infty).\label{h3}
\end{gather}
Moreover, if $\frac{\partial f(t,\cdot)}{\partial t}\in L^{p}(U)$ then
\[
\left\|\frac{\partial h(t,\cdot)}{\partial t} \right\|_{W^{1,p}(U)}  \leq   \widetilde{C}_{p} \left\| \frac{\partial f(t,\cdot)}{\partial t} \right\|_{L^{p}(U)}.
\]
Solutions are non unique, and the 
main difficulty addressed in \cite{Galdi94} is how to obtain a solution $h$
vanishing at 
the boundary and verifying the estimates with constant $\widetilde{C}_{p}$ depending 
only on $U$ and $p$. The existence of such a solution follows from  an 
explicit representation formula due to Bogovski{\u\i} 
\cite{Bogovskii79,Bogovskii80}.

We next utilize this result to construct a solution $h_\ep$ on 
the domain
\[
\Aep=\bigcup_{i,j} \Big(z_{i,j}^\ep+\ep U\Big).
\]
Let $u$ be a given vector field on $\RR^2$ such that
$u(t,\cdot)\in L^\infty(\R^2)$  for any $t\in [0,\infty)$. For any $i=1,\dots, 
n_{1}^\ep$ and $ j=1,\dots,n_{2}^\ep$ fixed, the function:
\[
f_{i,j}(t,x):= \ep\left(\nabla \phi^\ep \cdot u \right) (t, z_{i,j}^\ep + \ep x)
\]
is defined on the set $U$, is bounded (hence it belongs to 
$L^{p}(U)$), and satisfies for each $t$:
\begin{align*}
 \int_{U}f_{i,j}(t,\cdot)&= \frac1{\ep}\int_{z_{i,j}^\ep+\ep U} \div (\phi^\ep  u)(t, y)\, d y =\frac1{\ep}\int_{z_{i,j}^\ep+\ep \partial U}\phi^\ep  u \cdot n\, d s\\
 &= \frac1{\ep}\int_{z_{i,j}^\ep+\ep \partial (-2,2)^2}  u \cdot n\, d s = \frac1{\ep}\int_{z_{i,j}^\ep+\ep (-2,2)^2} \div  u =0,
\end{align*}
where we have used twice that $u$ is divergence free on $\R^2$, that
$\phi^\ep\equiv 1$ on  $z_{i,j}^\ep+ \ep\partial (-2,2)^2$, and
that $\phi^\ep\equiv 0$ on  $z_{i,j}^\ep+ \ep\partial K$.

Therefore, we can apply Galdi's results recalled above with $f$ replaced by 
$f_{i,j}(t,x)$ to conclude that there exists  a function 
$h_{i,j}$ on $[0,\infty)\times U$ such that $h_{i,j}(t,\cdot)\in W_{0}^{1,p}(U)$ is a solution of \eqref{eq div} with $f=f_{i,j}$ and 
satisfies \eqref{h1}-\eqref{h3}. Finally, we extend $h_{i,j}$ by 
zero on $\R^2$ and we define:
\[
\hep(t,x):= \sum_{i,j} h_{i,j} \left(t, \frac{x-z_{i,j}^\ep}\ep\right).
\]
To finish the proof, we show that $\hep$ has all the properties stated in Lemma 
\ref{lem:hep}. By definition, 
$h^\ep(t,\cdot)\in  W_{0}^{1,p}(\Aep)$ for all $t$ and 
we have 
\begin{align*}
\div \hep(t,x) &= \frac1\ep \sum_{i,j} (\div h_{i,j}) \left(t, \frac{x-z_{i,j}^\ep}\ep\right) = 
 \frac1\ep \sum_{i,j} f_{i,j} \left(t, \frac{x-z_{i,j}^\ep}\ep\right) \text{\Large  $\mathds{1}$}_{z_{i,j}^\ep + \ep U}\\
 &= \nabla \phi^\ep \cdot u (t,x),
\end{align*}
using that the obstacles are separated and $\supp \nabla \phi^\ep \subset \bigcup \{z_{i,j}^\ep + \ep U\}$. In the 
formula above, {\Large  $\mathds{1}$}$_{z_{i,j}^\ep + \ep U}$ denotes the 
characteristic function  of the set $z_{i,j}^\ep + \ep U$.
Next, for any $t$, we compute
\begin{align*}
\| \hep(t,\cdot) \|_{W^{1,p}_{\ep}}^{p} =&
\frac1{\ep^{p}} \sum_{i,j}\int_{z_{i,j}^\ep + \ep U} |h_{i,j}|^{p} \left(t, \frac{x-z_{i,j}^\ep}\ep \right) \, dx\\
&+\sum_{i,j}\int_{z_{i,j}^\ep + \ep U} \left| \frac1{\ep} \nabla h_{i,j}\right|^{p} \left(t, \frac{x-z_{i,j}^\ep}\ep \right) \, dx\\
=&\ep^{2-p} \sum_{i,j} \| h_{i,j}(t,\cdot) \|_{W^{1,p}(U)}^{p}\leq \ep^{2-p} \widetilde{C}_{p}^{p} \sum_{i,j} \| f_{i,j}(t,\cdot) \|_{L^{p}(U)}^{p}\\
\leq&\ep^2 \widetilde{C}_{p}^{p}  \sum_{i,j} \int_{ U}  | \nabla \phi^\ep \cdot u 
|^{p_1}
(t,z_{i,j}^\ep+\ep x) \, dx \\
\leq& \widetilde{C}_{p}^{p}  \| \nabla \phi^\ep \cdot u(t,\cdot) \|_{L^{p}(\Aep)}^{p}.
\end{align*}
A similar calculation applies to derive the bounds on  $\| \hep(t,\cdot) - 
\hep(s,\cdot)\|_{W^{1,p}_{\ep}}$, and $\| \partial_{t}\hep(t,\cdot) 
\|_{W^{1,p}_{\ep}}$.
\end{proof}

\begin{remark}
In the proof of Lemma~\ref{lem:hep} above, we have used several times 
the fact that the obstacles are well separated (see Equation \eqref{U 
disjoint}). Therefore,  our analysis can be applied only to the case 
$d^\ep \geq C \ep$ for some $C$ independent of $\ep$. For distances $d^\ep \ll  
\ep$, we would need to understand the behavior of $\widetilde{C}_{p}$ on 
domains of the form  $ (-1,1)^2\setminus \rho \K$  
as $\rho\to 1^-$.
\end{remark}

In the same way, changing variables in the classical Poincar\'e inequality, we 
obtain the following result. The proof can be found in \cite[Lem. 3]{ILN09}. We 
sketch it here for the reader's convenience.

\begin{lemma}\label{lem:poincare} 
Let $\chi$ be a smooth cutoff function such that $\chi\equiv 1$ in $B(0,3)$ and 
$\chi\equiv 0$ in $B(0,4)^c$.
There exists a constant $K_{1}$ depending  only on $\K$  such that for any $u$ verifying
\[
\chi u \in H^1_{0}(\Omega^\ep),
\]
then
\begin{equation}\label{eq:Poincare}
    \| u \|_{L^2(\Aep)} \leq \ep K_1 \|\nabla u\|_{L^2(\Aep)},
\end{equation}
where $\Aep$ is given in Equation \eqref{def Aep}.
\end{lemma}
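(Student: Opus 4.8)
The plan is to reduce Lemma~\ref{lem:poincare} to the classical Poincar\'e inequality on the fixed, bounded reference cell $U = (-2,2)^2 \setminus \K$ by scaling, exactly as in the proof of Lemma~\ref{lem:hep}. First I would fix one obstacle index $(i,j)$ and change variables by setting $y = (x - z_{i,j}^\ep)/\ep$, so that the scaled piece $z_{i,j}^\ep + \ep U$ of $\Aep$ becomes $U$. The hypothesis $\chi u \in H^1_0(\Omega^\ep)$, together with the fact that $\chi \equiv 1$ on $B(0,3) \supset \Omega^\ep \cap \operatorname{supp}(1-\phi^\ep)$ (all the obstacles lie in $[0,1+\text{something}] \times [0,\dots]$, hence inside $B(0,3)$ once $\ep$ is small, or more simply inside $B(0,3)$ by the construction of $z_{i,j}^\ep$ since $d_\ep \le 1$), guarantees that $u$ itself restricted to a neighborhood of each $\K_{i,j}^\ep$ lies in $H^1$ and vanishes on $\partial \K_{i,j}^\ep$. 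After rescaling, the function $v(y) := u(z_{i,j}^\ep + \ep y)$ therefore belongs to the space of $H^1(U)$ functions vanishing on the inner boundary $\partial \K$, on which the Poincar\'e inequality $\|v\|_{L^2(U)} \le K_1 \|\nabla v\|_{L^2(U)}$ holds with a constant $K_1$ depending only on $\K$ (via the geometry of $U$).

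Next I would track the powers of $\ep$ produced by the change of variables. Since $dx = \ep^2\, dy$, one gets $\|u\|_{L^2(z_{i,j}^\ep + \ep U)}^2 = \ep^2 \|v\|_{L^2(U)}^2$ and $\|\nabla u\|_{L^2(z_{i,j}^\ep+\ep U)}^2 = \ep^2 \cdot \ep^{-2} \|\nabla v\|_{L^2(U)}^2 = \|\nabla v\|_{L^2(U)}^2$, the extra $\ep^{-2}$ coming from the chain rule $\nabla_x u = \ep^{-1}(\nabla_y v)$. Plugging the cell-wise Poincar\'e inequality into these identities yields $\|u\|_{L^2(z_{i,j}^\ep + \ep U)} \le \ep K_1 \|\nabla u\|_{L^2(z_{i,j}^\ep + \ep U)}$ for each $(i,j)$. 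Finally, since $\Aep$ is the disjoint union over $(i,j)$ of the translated-and-scaled cells (disjointness is exactly \eqref{U disjoint}), I sum the squared inequalities over all $i,j$ and take square roots to obtain \eqref{eq:Poincare} with the same constant $K_1$.

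The only genuine point requiring care---and the one I would highlight as the main obstacle---is justifying that the cut-off hypothesis $\chi u \in H^1_0(\Omega^\ep)$ actually delivers, after scaling, an $H^1(U)$ function that vanishes on the relevant part of $\partial U$, namely the inner boundary $\partial \K$. This is where the Poincar\'e constant gets to depend only on $\K$: the reference domain $U$ has two boundary components, the outer square $\partial(-2,2)^2$ and the inner curve $\partial \K$, and the classical Poincar\'e inequality with zero trace on just one component of the boundary is still valid (the quotient $H^1(U)/\text{(functions vanishing on }\partial\K)$ argument, or a standard contradiction-compactness argument using Rellich), provided $\partial \K \in C^{1,1}$ as assumed in \eqref{K}. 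The condition $\chi \equiv 1$ on $B(0,3)$ ensures $\chi u = u$ near every obstacle (all obstacles sit in a fixed bounded region by \eqref{zij} together with $d_\ep \le 1$ and $\ep \le d_\ep$, so their union is contained in $B(0,3)$), so $u$ vanishes on each $\partial \K_{i,j}^\ep$ in the trace sense, and scaling transfers this to vanishing of $v$ on $\partial \K$. Everything else is the bookkeeping of scaling exponents and the disjointness summation described above; I would present it compactly, referring to \cite[Lem.~3]{ILN09} for the details as the statement already indicates.
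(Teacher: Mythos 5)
Your proposal is correct and follows essentially the same route as the paper: the authors likewise invoke the classical Poincar\'e inequality on the reference cell $U=(-2,2)^2\setminus\K$ for functions vanishing on $\partial\K$ (which is where $K_1$ acquires its dependence on $\K$ alone), rescale by $y=(x-z_{i,j}^\ep)/\ep$ to pick up the factor $\ep$ from the chain rule, and sum the squared inequalities over the disjoint cells making up $\Aep$. Your additional remarks on why $\chi u\in H^1_0(\Omega^\ep)$ yields zero trace on each $\partial\K_{i,j}^\ep$ are consistent with the paper's (terser) treatment and require no change.
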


\begin{proof}
We note that $\Aep \subset B(0,1)$ for all $\ep\leq 1$.

By the usual Poincar\'e inequality,
\[
\| u \|_{L^2(U)} \leq K_1 \|\nabla u\|_{L^2(U)},
\]
for all $u$ such that $\chi u \in H^1_{0}(
B(0,4)
\setminus \K)$, where $U$ is defined in \eqref{def U}.
Then, from the definition of $\Aep$ given in \eqref{def Aep}, it easily follow 
that
\begin{align*}
 \| u \|_{L^2(\Aep)}^2 &= \sum_{i,j} \int_{z_{i,j}^\ep+\ep U} |u(x)|^2\, dx
= \sum_{i,j} \int_{U} \left| u\left( z_{i,j}^\ep + \ep y \right)\right|^2\, \ep^2 dy\\
&\leq \sum_{i,j} \ep^2 K_{1}^2 \int_{U} \left|\ep \nabla u\left( z_{i,j}^\ep + \ep y \right)\right|^2\,  dy
= \ep^2 K_{1}^2 \| \nabla u \|_{L^2(\Aep)}^2,
\end{align*}
which ends the proof.
\end{proof}

We finish this subsection with a {\em Sobolev}-type  embedding.

\begin{lemma}\label{lem:ladyz}
Let $\chi$ be a cutoff function as in Lemma~\ref{lem:poincare}, and let 
$\Aep$ again be given in \eqref{def Aep}.
Then, there exists a constant $K_{2}$ depending  only on $\K$ such that for any 
$u$ verifying \
$\chi u \in H^1_{0}(\Omega^\ep)$, 
it holds
\begin{equation*}
    \| u \|_{L^4(\Aep)} \leq \sqrt{\ep} K_2 \|\nabla u\|_{L^2(\Aep)}.
\end{equation*}
\end{lemma}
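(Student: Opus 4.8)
The plan is to mimic the proof of Lemma~\ref{lem:poincare} exactly, replacing the Poincar\'e inequality on the fixed reference domain $U=(-2,2)^2\setminus\K$ by the two-dimensional Sobolev embedding $H^1_0 \hookrightarrow L^4$ on $U$, and then rescaling. First I would record the fixed-domain estimate: since $U$ is bounded with Lipschitz boundary (in fact $\partial\K\in C^{1,1}$), the Sobolev/Ladyzhenskaya inequality gives a constant $K_2$ depending only on $\K$ such that
\[
\|v\|_{L^4(U)} \leq K_2 \|\nabla v\|_{L^2(U)}
\]
for all $v$ with $\chi v \in H^1_0(B(0,4)\setminus\K)$ (equivalently, $v\in H^1_0(U)$ after noting $\Aep\subset B(0,1)$ for $\ep\le 1$, so the cutoff $\chi$ equals $1$ on the relevant region and $v$ genuinely vanishes on $\partial\K$ inside $U$). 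In 2D one has $\|v\|_{L^4}^2 \le C\|v\|_{L^2}\|\nabla v\|_{L^2}$, and combined with Poincar\'e on $U$ this yields the stated homogeneous bound; alternatively just invoke $H^1_0(U)\hookrightarrow L^4(U)$ directly with Poincar\'e absorbing the $L^2$ norm of $v$.

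Next I would change variables. Writing $\Aep = \bigcup_{i,j}(z_{i,j}^\ep + \ep U)$ as a disjoint union (by \eqref{U disjoint}), for each $(i,j)$ set $v_{i,j}(y) := u(z_{i,j}^\ep + \ep y)$ on $U$. Then $\|u\|_{L^4(z_{i,j}^\ep+\ep U)}^4 = \ep^2 \|v_{i,j}\|_{L^4(U)}^4$ and $\|\nabla u\|_{L^2(z_{i,j}^\ep+\ep U)}^2 = \ep^{-2}\cdot \ep^2 \|\nabla v_{i,j}\|_{L^2(U)}^2 = \|\nabla v_{i,j}\|_{L^2(U)}^2$ — wait, more carefully: $\nabla_y v_{i,j}(y) = \ep (\nabla u)(z_{i,j}^\ep+\ep y)$, so $\|\nabla v_{i,j}\|_{L^2(U)}^2 = \ep^2 \int_U |(\nabla u)(z_{i,j}^\ep+\ep y)|^2\,dy = \int_{z_{i,j}^\ep+\ep U}|\nabla u|^2$. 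Applying the fixed-domain inequality to $v_{i,j}$ gives $\ep^{2}\|v_{i,j}\|_{L^4(U)}^4 \le \ep^2 K_2^4 \|\nabla v_{i,j}\|_{L^2(U)}^4 = \ep^2 K_2^4 \big(\int_{z_{i,j}^\ep+\ep U}|\nabla u|^2\big)^2$, i.e. $\|u\|_{L^4(z_{i,j}^\ep+\ep U)}^4 \le K_2^4\, \ep^2 \big(\|\nabla u\|_{L^2(z_{i,j}^\ep+\ep U)}^2\big)^2$. Taking fourth roots, $\|u\|_{L^4(z_{i,j}^\ep+\ep U)} \le \sqrt\ep\, K_2 \|\nabla u\|_{L^2(z_{i,j}^\ep+\ep U)}$.

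Finally I would sum over $(i,j)$. Using $\|u\|_{L^4(\Aep)}^4 = \sum_{i,j}\|u\|_{L^4(z_{i,j}^\ep+\ep U)}^4 \le K_2^4 \ep^2 \sum_{i,j}\big(\|\nabla u\|_{L^2(z_{i,j}^\ep+\ep U)}^2\big)^2 \le K_2^4 \ep^2 \big(\sum_{i,j}\|\nabla u\|_{L^2(z_{i,j}^\ep+\ep U)}^2\big)^2 = K_2^4 \ep^2 \|\nabla u\|_{L^2(\Aep)}^4$, where the middle inequality is just $\sum a_k^2 \le (\sum a_k)^2$ for nonnegative $a_k$. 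Taking fourth roots gives the claim. The only slightly delicate point — the "main obstacle," though it is minor — is justifying that the hypothesis $\chi u \in H^1_0(\Omega^\ep)$ really licenses the fixed-domain Sobolev inequality on each scaled cell with a constant uniform in $(i,j)$ and $\ep$: this is exactly where we use that there are finitely many admissible shapes $\K$ (so $U$ ranges over a finite set), that $\Aep\subset B(0,1)$ so $\chi\equiv 1$ there, and that $u$ vanishes on each $\pa\K_{i,j}^\ep$ in the trace sense, which after rescaling says $v_{i,j}\in H^1_0(U)$. Everything else is the routine scaling bookkeeping shown above.
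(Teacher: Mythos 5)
Your proof is correct and follows essentially the same route as the paper: the Sobolev embedding combined with Poincar\'e on the fixed reference cell $U=(-2,2)^2\setminus\K$, a rescaling of each cell $z_{i,j}^\ep+\ep U$, and a summation step using $\sum_k a_k^2\le(\sum_k a_k)^2$ (the paper phrases the same bookkeeping via subadditivity of the square root). The only nitpick is your closing remark that $v_{i,j}\in H^1_0(U)$: the rescaled function vanishes only on $\partial\K$, not on the outer boundary of $U$, but you state the correct hypothesis earlier and the partial-boundary Poincar\'e inequality is exactly what is needed, so the argument stands.
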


\begin{proof}
Bringing together the Sobolev embedding of $H^{1}(U)$ in $L^4(U)$ and the
Poincar\'e inequality, we have
\[
\| u \|_{L^4(U)} \leq K \| u\|_{H^1(U)} \leq K K_2 \|\nabla u\|_{L^2(U)},
\]
for all $u$ such that $\chi u \in H^1_{0}(\R^2 \setminus \K)$.

Hence, a change of variables gives
\begin{align*}
 \| u \|_{L^4(\Aep)}^2 &= \Big(\sum_{i,j} \int_{z_{i,j}^\ep+\ep U} |u(x)|^4\, dx\Big)^{1/2}
 \leq \sum_{i,j} \Big(\int_{z_{i,j}^\ep+\ep U} |u(x)|^4\, dx\Big)^{1/2}\\
&\leq \sum_{i,j}  \Big( \int_{U} \left| u\left( z_{i,j}^\ep + \ep y \right)\right|^4\, \ep^2 dy\Big)^{1/2}\\
&\leq \sum_{i,j} K K_{1}\ep \int_{U} \left|\ep \nabla u\left( z_{i,j}^\ep + \ep y \right)\right|^2\,  dy
= \ep K_{2}^2 \| \nabla u \|_{L^2(\Aep)}^2.
\end{align*}
\end{proof}

\subsection{Basic estimates on the Euler equations}\label{sec:Euler}

We end these preliminaries by collecting some results  on the Euler
solution $u^E$ which will be used mainly in Section~\ref{sec:energy}. Except 
for Proposition~\ref{prop:uepest}, the statements presented here are well 
known and complete proofs can be found, for example, in
\cite{MajdaBertozzi}.

As mentioned in the Introduction, we give the initial velocity
 as $u_{0}=K_{\R^2}[\omega_{0}]$, where $K_{\RR^2}$ is the
Biot-Savart kernel defined in\eqref{u0} and the initial vorticity $\omega_{0}\in
C^\infty_{c}(\R^2)$. Hence, by a result of McGrath \cite{McGrath} there is
a unique global strong solution $\uE$ of the Euler equations \eqref{eq:Euler} in
the full plane. 

The vorticity $\omega:= \curl\uE$ verifies in a weak sense the transport equation:
\[
\partial_{t}\omega + \uE \cdot \nabla \omega = 0,
\]
which allows us to deduce the conservation of the $L^p$ norm of the vorticity:
\[
\| \omega(t,\cdot) \|_{L^p(\R^2)} = \| \omega_{0} \|_{L^p(\R^2)}, \quad \forall t>0,\quad 1\leq p \leq \infty.
\]
By a classical estimate on the Biot-Savart kernel, we obtain that $\uE$ is 
uniformly bounded:
\begin{equation}\label{uE infty}
\| \uE \|_{L^\infty(\R^+\times \R^2)} \leq C \| \omega \|_{L^\infty(\R^+\times \R^2)}^{1/2} \|  \omega \|_{L^\infty(\R^+;L^1( \R^2))}^{1/2}\leq C\| \omega_{0}\|_{L^1\cap L^\infty}
\end{equation}
and by the Calder\'on-Zygmund inequality, we infer that for all $p\in (1, \infty)$:
\begin{equation}\label{uE CZ}
\| \nabla \uE \|_{L^\infty(\R^+;L^p (\R^2))} \leq C_{p} \|  \omega \|_{L^\infty(\R^+;L^p( \R^2))}\leq C_{p}\| \omega_{0}\|_{L^1\cap L^\infty}.
\end{equation}
Hence,  the nonlinear term $\uE\cdot \nabla \uE$ belongs  to 
$L^\infty(\RR^+;L^2\cap L^4(\R^2))$. Since  $p^E$ is a solution of 
$\Delta p^E=\div (\uE\cdot \nabla \uE)$, up to choosing $p^E$ with zero mean value on $B(0,2)$, it belongs to $L^\infty(\RR^+; H^1\cap 
W^{1,4}(B(0,2)))$
and we deduce from elliptic estimates and the Sobolev embedding that
\begin{equation}\label{pE}
p^E \in  L^\infty(\R^+\times B(0,2)).
\end{equation}
These estimates also imply that $u_{t}^E$ is in $L^\infty(\R^+;L^4(B(0,2))$.

Since the Calder\'on-Zygmund inequality is not true for $p=\infty$,
there is no bound on
$\| \nabla \uE(t,\cdot )\|_{L^\infty(\R^2)}$ uniformly in time. However, 
the following well-known estimate  \cite{yudo63} holds:
\begin{equation}\label{grad uE}
\| \nabla \uE(t,\cdot )\|_{L^\infty(\R^2)} \leq C_{0} e^{C_{0}t} \quad \forall t\geq 0,
\end{equation}
where $C_{0}$ depends on 
$\|\omega_{0}\|_{W^{1,\infty}}$.
(See  \cite{KiselevSverak} for a discussion of
results concerning the sharpness of the double exponential growth of $\|\nabla
\omega\|_{L^\infty}$.)
In this paper, we do not investigate the minimal regularity of the initial
data needed for our result to hold, as the vanishing viscosity limit is a
singular problem even for smooth data. In particular, it is known that
\eqref{grad uE} holds under much weaker conditions (e.g. $\omega_0$ in the Besov
space $B^{2/p}_{p,1}$ \cite{Vi98}).

In the following proposition, we utilize Lemma~\ref{lem:hep} to construct a 
corrector to the truncated velocity $\phi^\ep \uE$  to meet 
the no-slip boundary conditions and the divergence-free condition, which will 
be employed for the energy estimate in Section~\ref{sec:energy}.

\begin{proposition} \label{prop:uepest} Let $\omega_{0}\in C^\infty_{c}(\R^2)$
and let $\uE$ be the solution of the Euler equations 
\eqref{eq:Euler} with initial condition $u_{0}$, given in terms of $\omega_{0}$ 
in \eqref{u0}). For any $\varepsilon>0$, let $\phi^\ep$ and $\Aep$ be defined 
as in \eqref{phi ep}-\eqref{def Aep}. Then,
there exists $\hep \in 
L^\infty(\R^+,W^{1,4}_{0}(\Aep))$ such that, after extending $\hep$ in $\R^2$ by
zero, and setting 
\[
u^\varepsilon:= \phi^\ep \uE - \hep,
\]
we have that $\uep$ is divergence free and identically zero on $\partial
\Omega^\varepsilon$. 
Moreover,  there exists a constant $K_E$,
dependent on $u^E$ but independent of
$\varepsilon$,
such that 
for all $\varepsilon>0$:
\begin{enumerate}
\item $\| \hep \|_{L^\infty(\R^+;L^4(\R^2))} \leq K_E \frac{\sqrt{\ep}}{   d_{\varepsilon}^{(1+\mu)/4} }$; \label{i:uepest.a}
\item $\| \partial_{t} \hep \|_{L^\infty(\R^+;L^2(\R^2))} \leq K_E \frac{\sqrt{\ep}}{   d_{\varepsilon}^{(1+\mu)/4} } $; \label{i:uepest.b}
\item $\| \nabla \hep   \|_{L^\infty(\R^+;L^2(\R^2))} \leq K_E \frac{1}{d_{\varepsilon}^{(1+\mu)/2}}  $;\label{i:uepest.c}
\item $\| \uE - u^\ep   \|_{L^\infty(\R^+;L^4(\R^2))} \leq K_E  \frac{\sqrt{\ep}}{   d_{\varepsilon}^{(1+\mu)/4} }$.\label{i:uepest.d}
\end{enumerate}
\end{proposition}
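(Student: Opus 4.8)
The plan is to apply Lemma~\ref{lem:hep} with $p=4$ to the divergence-free vector field $u=\uE(t,\cdot)$, which is admissible by the uniform bound \eqref{uE infty}, and set $u^\ep := \phi^\ep \uE - \hep$. The algebraic identity $\dive(\phi^\ep \uE) = \nabla\phi^\ep\cdot \uE + \phi^\ep\,\dive\uE = \nabla\phi^\ep\cdot\uE$ together with $\dive\hep = \nabla\phi^\ep\cdot\uE$ on $\Aep$ (and $\hep$ extended by zero, with $\hep\in W^{1,4}_0(\Aep)$ so the extension is in $W^{1,4}(\R^2)$ with no spurious distributional divergence on $\partial\Aep$) gives $\dive u^\ep = 0$ in $\R^2$. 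For the boundary condition: on $\partial\Omega^\ep = \cup_{i,j}\partial\K^\ep_{i,j}$ we have $\phi^\ep\equiv 0$ (since $\partial\K^\ep_{i,j}\subset z^\ep_{i,j}+\ep[-1,1]^2$, where $|x-z^\ep_{i,j}|_\infty/\ep\le 1\le 3/2$, so $\phi((x-z^\ep_{i,j})/\ep)=1$ and the sum in \eqref{phi ep} equals $1$), and $\hep=0$ there as well since $\hep\in W^{1,4}_0(\Aep)$ and $\partial\K^\ep_{i,j}\subset\partial\Aep$. Hence $u^\ep=0$ on $\partial\Omega^\ep$.

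For the four quantitative estimates, the engine is the combination of Lemma~\ref{lem:hep} (with $p=4$), the cut-off estimate \eqref{eq:phiepest}, and the $L^\infty$ bounds \eqref{uE infty}, \eqref{pE}, and the remark that $u^E_t\in L^\infty(\R^+;L^4(B(0,2)))$. Concretely: by Lemma~\ref{lem:hep}, $\|\hep(t,\cdot)\|_{W^{1,4}_\ep}\le \widetilde C_4\|\nabla\phi^\ep\cdot\uE(t,\cdot)\|_{L^4(\Aep)} \le \widetilde C_4\|\uE\|_{L^\infty}\|\nabla\phi^\ep\|_{L^4} \le C\|\omega_0\|_{L^1\cap L^\infty}\cdot \ep^{-1}\cdot C_4\,\ep^{1/2}/d_\ep^{(1+\mu)/4}$. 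Unpacking the $W^{1,4}_\ep$ norm \eqref{eq:W1pep}: $\ep^{-1}\|\hep\|_{L^4(\Aep)}\le \text{(that bound)}$ yields item \eqref{i:uepest.a}, namely $\|\hep\|_{L^4}\le K_E\sqrt\ep/d_\ep^{(1+\mu)/4}$; and $\|\nabla\hep\|_{L^4(\Aep)}\le C/d_\ep^{(1+\mu)/4}$. For item \eqref{i:uepest.c}, I use $\|\nabla\hep\|_{L^2(\Aep)}$: either via Hölder on $\Aep$ (whose measure is $\lesssim \ep^2/d_\ep^{1+\mu}$) applied to the $L^4$ bound on $\nabla\hep$, which gives $\|\nabla\hep\|_{L^2}\lesssim (\ep^2/d_\ep^{1+\mu})^{1/4}\cdot 1/d_\ep^{(1+\mu)/4} = \sqrt\ep/d_\ep^{(1+\mu)/2}$ — this is $\le K_E/d_\ep^{(1+\mu)/2}$ since $\ep\le d_\ep\le 1$ — or, more directly, by rerunning Lemma~\ref{lem:hep} with $p=2$ (also admissible) and using \eqref{eq:phiepest} with $p=2$: $\|\nabla\hep\|_{L^2}\le \widetilde C_2\|\uE\|_{L^\infty}\|\nabla\phi^\ep\|_{L^2}\le C\cdot\ep^{-1}\cdot C_2\ep/d_\ep^{(1+\mu)/2} = C/d_\ep^{(1+\mu)/2}$.

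For item \eqref{i:uepest.b}, I differentiate in time: since $\partial_t(\nabla\phi^\ep\cdot\uE) = \nabla\phi^\ep\cdot\partial_t\uE$ (the cut-off is time-independent), the last assertion of Lemma~\ref{lem:hep} gives $\|\partial_t\hep(t,\cdot)\|_{W^{1,4}_\ep}\le \widetilde C_4\|\nabla\phi^\ep\cdot\partial_t\uE(t,\cdot)\|_{L^4(\Aep)}$. Now $\partial_t\uE$ is only in $L^4$ locally in space, not $L^\infty$; but $\Aep\subset B(0,1)\subset B(0,2)$, so $\|\nabla\phi^\ep\cdot\partial_t\uE\|_{L^4(\Aep)}\le \|\nabla\phi^\ep\|_{L^\infty}\|\partial_t\uE\|_{L^4(B(0,2))}\le C\,\ep^{-1}\cdot K_E'$, whence $\|\partial_t\hep\|_{L^4(\Aep)}\le K_E$ — and then $\|\partial_t\hep\|_{L^2}\le |\Aep|^{1/4}\|\partial_t\hep\|_{L^4}\le C(\ep^2/d_\ep^{1+\mu})^{1/4}\cdot K_E = K_E\sqrt\ep/d_\ep^{(1+\mu)/4}$, giving \eqref{i:uepest.b}. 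Finally, item \eqref{i:uepest.d}: $\uE - u^\ep = (1-\phi^\ep)\uE + \hep$, so $\|\uE-u^\ep\|_{L^4}\le \|1-\phi^\ep\|_{L^4}\|\uE\|_{L^\infty} + \|\hep\|_{L^4}\le C\ep^{1/2}/d_\ep^{(1+\mu)/4}\cdot\|\omega_0\|_{L^1\cap L^\infty} + K_E\sqrt\ep/d_\ep^{(1+\mu)/4}$ by \eqref{eq:phiepest} and item \eqref{i:uepest.a}. The only mild subtlety — the step I'd expect to need the most care — is justifying that the zero-extension of $\hep\in W^{1,4}_0(\Aep)$ produces a globally divergence-free field with the asserted divergence identity (no singular boundary contribution), which is exactly why $W^{1,p}_0$ and the Bogovskiĭ construction in Lemma~\ref{lem:hep} are used rather than an arbitrary solution; everything else is a bookkeeping exercise chaining the three a priori bounds listed above. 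One should also note for $\hep\in L^\infty(\R^+;W^{1,4}_0(\Aep))$ that the $t$-regularity claimed in the statement follows from the second and third estimates of Lemma~\ref{lem:hep} applied to the continuity and differentiability in time of $\uE$.
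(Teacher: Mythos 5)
Your proof follows essentially the same route as the paper's: apply Lemma~\ref{lem:hep} with $p=4$ to the bounded, divergence-free field $\uE$, combine with the cut-off bounds \eqref{eq:phiepest} and with $\uE_t\in L^\infty(L^4(B(0,2)))$, and use H\"older on $\Aep$ (whose measure is $\lesssim \ep^2/d_\ep^{1+\mu}$) to pass from $L^4$ to $L^2$ in items \eqref{i:uepest.b} and \eqref{i:uepest.c}; your verification of the divergence and boundary conditions, though more explicit than the paper's, is correct. The only blemish is the intermediate claim $\|\nabla\hep\|_{L^4(\Aep)}\le C/d_\ep^{(1+\mu)/4}$, which should read $C\ep^{-1/2}/d_\ep^{(1+\mu)/4}$ (the gradient part of the $W^{1,4}_\ep$ norm carries no extra factor of $\ep$); running your H\"older step with the corrected bound yields exactly $C/d_\ep^{(1+\mu)/2}$, which is the paper's computation for item \eqref{i:uepest.c}, so the conclusion stands, and your alternative $p=2$ route is likewise valid.
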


\begin{proof}
The main idea is to use Lemma~\ref{lem:hep} with $p=4$. As 
$\uE$ is divergence free and uniformly bounded,
there exists $\hep \in
 W^{1,4}_{0}(\Aep)$ such that  $ \div \hep = \nabla \phi^\ep \cdot 
\uE$, so that
\[
\div u^\ep =  \nabla \phi^\ep \cdot \uE + \phi^\ep \div \uE - \div \hep \equiv 0.
\]
Using  \eqref{eq:phiepest} in conjunction with \eqref{uE infty}, the estimates 
in Lemma~\ref{lem:hep} give:
\[
\| \hep \|_{L^\infty(L^4)} \leq \widetilde{C}_{4} \ep \| \nabla \phi^\ep \cdot \uE 
\|_{L^\infty(L^4)} \leq \widetilde{C}_{4} C_{4}    \frac{\ep^{1/2}}{d_{\varepsilon}^{(1+\mu)/4}}  
   \| \uE \|_{L^\infty},
\]
which proves \eqref{i:uepest.a}. 
Since $u_{t}^E$ belongs to 
$L^\infty(L^4(B(0,2)))$, we also obtain \eqref{i:uepest.b} by  H\"older's
inequality:
\begin{align*}
 \| \partial_{t} \hep \|_{L^\infty(L^2)} 
&\leq  C_{4}    \frac{\ep^{1/2}}{d_{\varepsilon}^{(1+\mu)/4}}  \| \partial_{t} \hep \|_{L^\infty(L^4)} \\
 &\leq C_{4}     \frac{\ep^{1/2}}{d_{\varepsilon}^{(1+\mu)/4}} \widetilde{C}_{4} \varepsilon \| \nabla \phi^\ep\|_{L^\infty}     \| \uE_{t} \|_{L^\infty(L^4(\supp \nabla \phiep))}.
\end{align*}

We prove \eqref{i:uepest.c} in a similar way:
\[
\| \nabla \hep   \|_{L^\infty(L^2)} \leq  C_{4}    \frac{\ep^{1/2}}{d_{\varepsilon}^{(1+\mu)/4}}  \widetilde{C}_{4}  \| \nabla \phi^\ep \cdot \uE\|_{L^\infty(L^4)} \\
 \leq \frac{C}{d_{\varepsilon}^{(1+\mu)/2}}  \| \uE \|_{L^\infty}.
\]

Finally, to establish \eqref{i:uepest.d},  we observe that
\[
\| \uE-  \uep\|_{L^4} \leq  \|(1-\phiep) \uE\|_{L^4} +  \|\hep\|_{L^4} \leq C 
\frac{\ep^{1/2}}{d_{\varepsilon}^{(1+\mu)/4}},
\]
which concludes the proof.
\end{proof}

\begin{remark}
Since $\uE_{t}$ is bounded in $L^\infty(L^p)$ for any $p<\infty$, 
Lemma \ref{lem:hep} with $p=2$  also gives that
\[
\| \partial_{t} \hep \|_{L^\infty(L^2)} \leq \widetilde{C}_{2} \ep \| \nabla \phi^\ep \cdot 
\uE_{t} \|_{L^\infty(L^2)} \leq C     
\frac{\ep^{2/q}}{d_{\varepsilon}^{(1+\mu)/q}}     \| \uE_{t} \|_{L^\infty(L^p)},
\]
where $q=\frac{2p}{p-2}>2$ can be chosen as close as we want of $2$.

Nevertheless, for the sequel, we only need the case $q=4$ (the case
corresponding to the bound \eqref{i:uepest.b} in Proposition \ref{prop:uepest}
above).
\end{remark}

\begin{remark}\label{rem.KE}
As we did not use \eqref{grad uE} in the previous proof,
we can give a more precise dependence of the constant $K_{E}$  on the geometry
and the Euler solutions (which are both fixed throughout). In fact,  $K_E$
depends  only on $\K$ and $\| \omega_{0}\|_{L^1\cap L^\infty}$. In particular,
the bounds \eqref{i:uepest.a}, \eqref{i:uepest.c}, \eqref{i:uepest.d} in
Proposition  \ref{prop:uepest} are linear in $\| \uE \|_{L^\infty}$. Hence,
in these estimates $K_{E}$ is of the form $K\| \omega_{0}\|_{L^1\cap L^\infty}$,
where $K$ depends only on $\K$.
\end{remark}

\section{Convergence of the initial velocity} \label{sec:initial}

In this section, we discuss the convergence of the Navier-Stokes initial 
 data (taken dependent on $\ep$ only) to the Euler initial data in the energy 
norm, as $\ep$ goes to zero. 

Let $\omega_{0}\in C^\infty_{c}(\R^2 \setminus ([0,1]\times \{ 0\}))$ if $\mu
\in [0,1)$ and in $C^\infty_{c}(\R^2 \setminus ([0,1]^2))$ if $\mu =1$, then for
$\varepsilon$ small enough, $\omega_{0}\equiv 0$ on $\K_{i,j}^\varepsilon$ for
all $i,j$. For any $\ep
>0$ there exists a unique vector field $u_{0}^\ep$, which is solution of 
\eqref{u0eps} (see e.g. \cite{Kikuchi} for a proof). We choose  $R$ so that 
$\supp \omega_{0} \cup \partial \Omega^\ep \subset B(0,R)$. 
The function 
\[
\psi:\ z=x_{1}+ix_{2}\mapsto (u_{0}^\ep)_{1}(x_{1},x_{2})-i(u_{0}^\ep)_{2}(x_{1},x_{2})
\]
 verifies the 
Cauchy-Riemann equations on $B(0,R)^c$. Hence, $\psi$ is holomorphic and admits 
a Laurent series decomposition $\psi(z)=\sum_{k=1}^\infty \frac{c_{k}}{z^k}$, 
which allows us to conclude that $u_{0}^\ep \in L^\infty\cap 
L^{2,\infty}(B(0,R)^c)$ (where $L^{2,\infty}$ denotes the Marcinkiewicz weak 
$L^2$-space). By elliptic regularity,  $u_{0}^\ep$ is bounded in 
$B(0,R)\cap \Omega^\ep$, implying that
\begin{equation}\label{u0ep regularity}
u_{0}^\ep \in L^\infty\cap L^{2,\infty}(\Omega^\ep).
\end{equation}
This regularity of the initial data is sufficient  to apply the result of 
Kozono and Yamazaki \cite{KY95}, yielding existence and uniqueness of global 
solution to the Navier-Stokes equations \eqref{eq:NSE}.

The goal of this section is to prove that $u_{0}^\ep$ converges in $L^2$ to 
$u_{0}$ defined in \eqref{u0}. More precisely, we prove the following:

\begin{proposition}\label{prop:u0eps}
Let $\OMep$ be defined in \eqref{Kijeps}-\eqref{OM} with $d_{\ep}\geq \ep$ and 
let $\omega_{0}\in C^\infty_{c}(\R^2)$. There exists a constant $C$, 
which depends only on $\K$, such that, if $\omega_{0}$ is supported in $\Omega^\varepsilon$ then
\[
\| u_{0}^\varepsilon - u_{0}\|_{L^2(\Omega^\varepsilon)} \leq C \| \omega_{0}\|_{L^1\cap L^\infty} \frac{\ep |\ln \ep | }{d_{\ep}^{(1+\mu)/2}},
\]
where $u_{0}^\ep$ is the unique solution of \eqref{u0eps} and $u_{0}$ the unique solution of \eqref{eq:u0}.
\end{proposition}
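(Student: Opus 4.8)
The plan is to construct $u_0^\ep$ explicitly via a truncation-plus-Bogovski\u{\i}-corrector of $u_0$ and estimate the difference, exploiting that $u_0$ and $u_0^\ep$ share the same curl $\omega_0$ (which vanishes on the obstacles by hypothesis) and are both divergence free. More precisely, I would first set $v^\ep := \phi^\ep u_0 - h^\ep$, where $\phi^\ep$ is the cut-off of \eqref{phi ep} and $h^\ep \in W^{1,p}_0(\Aep)$ is the Bogovski\u{\i} corrector from Lemma~\ref{lem:hep} applied to $u = u_0$ (which is divergence free and in $L^\infty$ since $\omega_0\in C^\infty_c$), so that $v^\ep$ is divergence free, supported in $\Omega^\ep$, and tangent to $\partial\Omega^\ep$. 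However, $v^\ep$ is not quite $u_0^\ep$ because its curl is not exactly $\omega_0$ and its circulations around the $\K_{i,j}^\ep$ need not vanish; so the next step is to correct $v^\ep$ by the (unique) divergence-free, curl-free vector field $w^\ep$ on $\Omega^\ep$ decaying at infinity with the prescribed circulations $\oint_{\pa\K_{i,j}^\ep} w^\ep\cdot\tau = -\oint_{\pa\K_{i,j}^\ep} v^\ep\cdot\tau$, so that $u_0^\ep = v^\ep + w^\ep$ solves \eqref{u0eps} by uniqueness.

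The key estimates then split into three pieces. The truncation error $\|(1-\phi^\ep)u_0\|_{L^2(\Omega^\ep)}$ is controlled by $\|u_0\|_{L^\infty}\|1-\phi^\ep\|_{L^2}$, which by \eqref{eq:phiepest} with $p=2$ is $O(\ep/d_\ep^{(1+\mu)/2})$ — already within the claimed bound, in fact without the logarithm. The corrector $h^\ep$ is estimated by Lemma~\ref{lem:hep} with $p=2$: $\|h^\ep\|_{L^2} \le \widetilde C_2\ep\|\nabla\phi^\ep\cdot u_0\|_{L^2(\Aep)} \le \widetilde C_2 C_2 \|u_0\|_{L^\infty}\,\ep/d_\ep^{(1+\mu)/2}$, again without a logarithm. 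So the logarithmic loss must come entirely from the circulation-correcting field $w^\ep$; this is the step I expect to be the main obstacle. To estimate each circulation $\oint_{\pa\K_{i,j}^\ep} v^\ep\cdot\tau$, I would use that on the annulus $z_{i,j}^\ep + \ep((-2,2)^2\setminus\K)$ one has $v^\ep = u_0 - h^\ep$ minus a cut-off transition, and since $u_0$ is curl-free on a neighborhood of $\pa\K_{i,j}^\ep$ (as $\omega_0\equiv 0$ there) with circulation zero around $\K_{i,j}^\ep$ in $\R^2$, the only contribution comes from $h^\ep$ and from $\nabla\phi^\ep$ times $u_0$; by Stokes and the $W^{1,2}_\ep$ bound on $h^\ep$, each circulation is $O(\ep\|u_0\|_{L^\infty})$ in magnitude, and there are $n_1^\ep n_2^\ep \le d_\ep^{-(1+\mu)}$ of them. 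The harmonic field $w^\ep$ with these circulations is then a sum of single-obstacle contributions, each behaving like $\gamma_{i,j}/|x - z_{i,j}^\ep|$ near its obstacle, and the $L^2$ norm of such a field on $\Omega^\ep$ over a bounded region picks up a factor $\sqrt{|\ln\ep|}$ from the logarithmic divergence of $\int r^{-2}\,r\,dr$ cut off at scales $\ep$ and $d_\ep$ (or $1$); summing the $\ell^2$-type contributions and using \eqref{n1n2} yields the bound $C\|\omega_0\|_{L^1\cap L^\infty}\,\ep|\ln\ep|/d_\ep^{(1+\mu)/2}$.

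Alternatively — and this may be cleaner, following \cite{BLM} directly as the statement suggests — I would represent $u_0^\ep = K_{\Omega^\ep}[\omega_0]$ via the Green's function / biholomorphic map of $\Omega^\ep$ and compare kernels; the estimate $\|u_0^\ep - u_0\|_{L^2}$ then reduces to controlling how the exterior Green's function of the perforated domain differs from $\frac1{2\pi}\ln|x-y|$, which is exactly the content of the harmonic measure / capacity estimates in \cite{BLM} and where the $\ep|\ln\ep|$ scaling naturally appears. Either way the logarithmic factor is intrinsic and tied to the two-dimensional nature of the problem (capacity of a small disk). I would present the first, more self-contained approach, invoking \cite{BLM} for the sharp single-obstacle harmonic-field estimate, and carefully track that the number of obstacles enters only through \eqref{n1n2}, so that the final constant $C$ depends on $\K$ alone while the $\|\omega_0\|_{L^1\cap L^\infty}$ dependence is linear, consistent with Remark~\ref{rem.KE}.
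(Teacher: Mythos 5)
Your first (main) approach contains a genuine gap at its central step: the claimed identity $u_{0}^\ep = v^\ep + w^\ep$ is false. With $v^\ep := \phi^\ep u_{0} - \hep$, the field $v^\ep$ vanishes identically on $\partial\Omega^\ep$ (since $\phi^\ep\equiv 0$ near each $\K_{i,j}^\ep$ and $\hep\in W^{1,p}_{0}(\Aep)$), so its circulations $\oint_{\partial\K_{i,j}^\ep} v^\ep\cdot\tau$ are already zero --- there is nothing for a harmonic field $w^\ep$ to correct there. The actual defect of $v^\ep$ is in its \emph{curl}: on $\Aep$ one has $\curl v^\ep = \nabla\phi^\ep\wedge u_{0} - \curl\hep \neq \omega_{0}$, and adding a curl-free $w^\ep$ cannot repair this. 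Hence uniqueness in \eqref{u0eps} does not identify $v^\ep+w^\ep$ with $u_{0}^\ep$, and the entire third piece of your estimate (the $\sqrt{|\ln\ep|}$ from the circulation field, summed over $n_1^\ep n_2^\ep$ obstacles) is an estimate of a quantity that does not appear in a correct decomposition. Your diagnosis of where the logarithm comes from is therefore also off.

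The missing idea --- and what the paper actually uses --- is the $L^2$-orthogonality of the Leray--Helmholtz decomposition on $\Omega^\ep$. Whatever corrector $v^\ep$ one chooses (divergence free, tangent, zero circulations, decaying), the fields $u_{0}^\ep - v^\ep$ and $u_{0} - v^\ep$ have the \emph{same} curl in $\Omega^\ep$, and $u_{0}-u_{0}^\ep$ is a gradient because the circulations of $u_{0}$ around each hole vanish (this is exactly where the support hypothesis on $\omega_{0}$ enters). Consequently $u_{0}^\ep - v^\ep$ is the Leray projection of $u_{0}-v^\ep$ and $\|u_{0}^\ep - v^\ep\|_{L^2(\OMep)}\le\|u_{0}-v^\ep\|_{L^2(\OMep)}$, with no need to construct or estimate the correction explicitly. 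The paper applies this with a more elaborate explicit corrector $v^\ep=\nabla^\perp\psi^\ep$ built from the biholomorphism $\Tc:\K^c\to\R^2\setminus\overline{B(0,1)}$ and the method of images, and the factor $\ep|\ln\ep|$ arises from the pointwise bounds on the four explicit error terms $w_1^\ep,\dots,w_4^\ep$ (Propositions 3.3--3.6 of \cite{BLM}), e.g.\ the $L^\infty$ bound $C M_0\,\ep|\ln\ep|$ on $\int\ln\frac{\beta|x-y|}{\ep|\Tca{i,j}(x)-\Tca{i,j}(y)|}\,\omega_0(y)\,dy$ and the $|\ln\ep|^{1/2}$ from integrating $|x|^{-2}$ over an annulus of radii $\sim\ep$. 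Your second, sketched alternative is indeed the paper's route, but as stated it defers all the substance to \cite{BLM}. Your $L^2$ estimates for $(1-\phi^\ep)u_0$ and for $\hep$ via Lemma~\ref{lem:hep} are correct; combined with the orthogonality step they would in fact close the argument (and would do so without the logarithm), but that step is absent from your write-up, and the step you put in its place does not work.
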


This proposition follows from the analysis developed in \cite{BLM}. There, the 
authors have looked for the best condition on $\ep$ and $d_{\ep}$ ensuring 
convergence of $u_{0}^\varepsilon$ to $u_{0}$ in $L^2(\Omega^\varepsilon)$, but 
unfortunately, they did not explicitly state the rate of convergence. To obtain 
the rate, we now briefly review the results in \cite{BLM}. For brevity, we will
denote \ $M_{0}:=\| \omega_{0}\|_{L^1\cap L^\infty}$.

\subsection{Correction and decomposition} 
\def\Tc{\mathcal{T}}
\def\Tca#1{{\Tc^{\ep}_{#1}}}

We recall that $u_{0}$  has an explicit formula in terms of 
$\omega_{0}$ via the Biot-Savart kernel (Equation \eqref{u0}). However, there 
is no such formula available for  $u_{0}^\ep$. In addition, $u_{0}$ is 
not tangent to $\partial \OMep$ and hence, it is necessary to introduce an 
explicit corrector, which is given in terms of:
\begin{enumerate}
 \item  a cutoff function $\phi_{i,j}^\ep$ equal to $1$ close to $\K_{i,j}^\ep$:
\[
\phi_{i,j}^\ep(x):=   \phi\left(\frac{x-z_{i,j}^\ep}\ep \right) \quad \text{(
with $\phi$ given in \eqref{eq:phi})} ;
\]
\item a biholomorphism $\Tc: \ \K^c \to \R^2\setminus \overline{B(0,1)}$ such that $\Tc(z)=\beta z +h(z)$ for some $\beta \in \R^+$ and $h$ a bounded holomorphic function.
\end{enumerate}
Then, the correction is defined by
\[
v^\ep:= \nabla^\perp \psi^\ep,
\]
where 
\begin{align*}
\psi^\ep(x) :=& \frac1{2\pi} \int_{\OMep} \ln|x-y| \omega_{0}(y)\, d y\\
&- \frac1{2\pi}\sum_{i,j} \phi_{i,j}^\ep(x) \int_{\OMep} {\ln}\frac{\beta |x-y|}{\ep |\Tca{i,j}(x)-\Tca{i,j}(y)| }      \omega_{0}(y)\, d y\\
&+\frac{1}{2\pi}\sum_{i,j} \phi_{i,j}^\ep (x)\int_{\OMep}{\ln}\frac{ 
|\Tca{i,j}(x)|}{|\Tca{i,j}(x)-\Tca{i,j}(y)^*|} \omega_{0}(y) \, d y, 
\end{align*}
with
\[
\Tca{i,j}(x) := \Tc\left(\frac{x-z_{i,j}^\ep}\ep \right).
\]
Above, we have denoted by $y^*=\frac{y}{|y|^2}$ the conjugate point to $y$
across 
the unit circle in $\RR^2$. This formula is related to the 
Biot-Savart law outside one obstacle, and we can check that $v^\ep$ verifies 
the following properties:
\begin{equation*}\begin{split}
\div v^\ep =0 \text{ in } \Omega^\varepsilon, \quad  v^\ep \cdot n =0 \text{ on } \partial \Omega^\varepsilon, \\
  \int_{\partial \K_{i,j}^\varepsilon}v^\ep \cdot \tau\, ds =0 \text{ for all } i,j,\quad \lim_{x\to \infty} |v^\ep(x)|=0.
\end{split}\end{equation*}

Then, we decompose $u_{0}-v^\ep$ as 
\begin{equation} \label{decompo we}
u_{0}-v^\ep =\sum_{k=1}^4 w_{k}^\ep,
\end{equation}
where
\begin{equation*}\begin{split}
w_{1}^\ep(x)=&\frac1{2\pi} \sum_{i,j} \nabla^\perp \phi_{i,j}^\ep(x) \int_{\OMep} \ln \frac{\beta|x-y|}{\ep|\Tca{i,j}(x)-\Tca{i,j}(y)|}\omega_{0}(y)\, dy, \\
w_{2}^\ep(x)=&\frac1{2\pi} \sum_{i,j}\nabla^\perp \phi_{i,j}^\ep(x) \int_{\OMep} \ln \frac{|\Tca{i,j}(x)-\Tca{i,j}(y)^*|}{|\Tca{i,j}(x)|}\omega_{0}(y)\, dy, \\
w_{3}^\ep(x)=&\frac1{2\pi}\sum_{i,j}\phi_{i,j}^\ep(x) \int_{\OMep} \Biggl(\frac{(x-y)^\perp}{|x-y|^2}- (D\Tca{i,j})^T(x)\frac{(\Tca{i,j}(x)-\Tca{i,j}(y))^\perp}{|\Tca{i,j}(x)-\Tca{i,j}(y)|^2} \Biggl) \omega_{0}(y)\, dy,    \\
w_{4}^\ep(x)=&\frac1{2\pi} \sum_{i,j}\phi_{i,j}^\ep(x) (D\Tca{i,j})^T(x) \int_{\OMep} \Biggl(\frac{\Tca{i,j}(x)-\Tca{i,j}(y)^*}{|\Tca{i,j}(x)-\Tca{i,j}(y)^*|^2}- \frac{\Tca{i,j}(x)}{|\Tca{i,j}(x)|^2}\Biggl)^\perp \omega_{0}(y)\, dy,
\end{split}\end{equation*}
where we have used that $\omega_{0}$ is supported in $\Omega^\varepsilon$.
In \cite{BLM} and in \cite{LLL}, the explicit formula for $w_{k}^\ep$ is 
utilized to prove that $v^\ep$ converges to $u_{0}$ as $\ep\to 0$. 
In \cite{BLM}, a convergence analysis  was also performed for $d_{\ep} \ll 
\ep$, which complicates the choice of the cutoff function. In our 
case  where $d_{\ep}\geq \ep$, the cutoff $\phi_{i,j}^\ep$ defined above is 
sufficient to apply the estimates in \cite{BLM}. We recall these estimates, 
adapted to the set-up of this paper, in the remainder of the section.

\subsection{Estimates on \protect$w_{1}^\ep$ and \protect$w_{3}^\ep$}

Following the proofs of Proposition 3.3 and Proposition 3.4 in \cite{BLM}, we 
first tackle estimates for $w_{1}^\ep$ and $w_{3}^\ep$. When 
$\K=\overline{B(0,1)}$, $\Tc={\rm Id}$ (so $\beta=1$) and 
$w_{1}^\ep=w_{3}^\ep \equiv 0$. In the general case, the main idea is to use 
that
\[
\Tca{i,j}(x)= \Tc \Big(\frac{x-z_{i,j}^\ep}{\ep} \Big)\sim \beta \frac{x-z_{i,j}^\ep}{\ep} = \beta \ {\rm Id}\Big(\frac{x-z_{i,j}^\ep}{\ep}\Big).
\]

In \cite[Proposition 3.3]{BLM}, it was proved that
\[
\Bigl\| \int_{\OMep} \ln \frac{\beta|x-y|}{\ep|\Tca{i,j}(x)-\Tca{i,j}(y)|}\omega_{0}(y)\, d y  \Bigl\|_{L^\infty(\OMep)} \leq CM_{0}\ep |\ln \ep|,
\]
where $C$ depends only on  $\K$. Using that the 
supports of $\nabla \phi_{i,j}^\ep$ are disjoint for distinct $i,j$ 
(see \eqref{U disjoint}), we deduce that:
\[
\| w_{1}^\ep \|_{L^2(\OMep)} \leq CM_{0}\ep |\ln \ep| \frac1\ep \| \nabla\phi \|_{L^\infty}\frac{\ep}{d_{\ep}^{(1+\mu)/2}} =  CM_{0} \frac{\ep |\ln \ep| }{d_{\ep}^{(1+\mu)/2}}.
\]

In \cite[Proposition 3.4]{BLM}, it was also shown that, for all $i,j$ and $x\in 
\Omega^\ep$, it holds
\begin{multline*}
 \Bigg|\int_{\OMep} \Biggl(\frac{(x-y)^\perp}{|x-y|^2}- (D\Tca{i,j})^T(x)\frac{(\Tca{i,j}(x)-\Tca{i,j}(y))^\perp}{|\Tca{i,j}(x)-\Tca{i,j}(y)|^2} \Biggl) \omega_{0}(y)\, dy \Bigg|\\
   \leq CM_{0}\Big( \ep^{1/2}+\frac{\ep}{|x-z_{i,j}^\ep|}\Big),
\end{multline*}
where $C$ depends only on $\K$. As there exists $\delta>0$ such that 
$B(0,\delta)\subset \K$, $\supp \phi_{i,j}^\ep \subset B(z_{i,j}^\ep, 
2\sqrt{2}\ep)\setminus B(z_{i,j}^\ep,\delta\ep)$, and we can estimate the $L^2$ 
norm of $w_{3}^\ep$ as follows:
\begin{align*}
\| w_{3}^\ep \|_{L^2(\OMep)} & \leq CM_{0}\ep^{1/2} \frac{\ep}{d_{\ep}^{(1+\mu)/2}} + CM_{0}\ep \Big( \frac{1}{d_{\ep}^{1+\mu}} \int_{B(0, 2\sqrt{2}\ep)\setminus B(0,\delta\ep)} \frac{d x}{|x|^2}\Big)^{1/2}\\
&\leq CM_{0}\frac{\ep |\ln \ep |^{1/2}}{d_{\ep}^{(1+\mu)/2}}.
\end{align*}

\subsection{Estimates on $w_{2}^\ep$ and $w_{4}^\ep$}

When $\K=\overline{B(0,1)}$, we have $\Tca{i,j}(y)^* = \ep^2 
\dfrac{y-z_{i,j}^\ep}{|y-z_{i,j}^\ep|^2}$. 
We observe that, even though  $|y-z_{i,j}^\ep|$ can be of order  $\ep$ (this 
is, in fact, the case if $y\in \partial B(z_{i,j}^\ep,\ep)$), $\Tca{i,j}(y)^*$ 
is still small compare to $\Tca{i,j}(x)$, since $|\Tca{i,j}(x)|\geq 
1$. Hence,  $w_{2}^\ep$ and $w_{4}^\ep$ should be small.

More precisely, in \cite[Proposition 3.5]{BLM}, it was proved that, for all 
$i,j$,
\[
\Bigg\|  \int_{\OMep} \ln \frac{|\Tca{i,j}(x)-\Tca{i,j}(y)^*|}{|\Tca{i,j}(x)|}\omega_{0}(y)\, dy \Bigg\|_{L^\infty(\supp \nabla \phi_{i,j}^\ep)} \leq CM_{0} \ep
\]
where $C$ depends only on  $\K$. Therefore, we find that
\[
\| w_{2}^\ep \|_{L^2(\OMep)} \leq C M_{0} \ep  \frac1\ep \| \nabla\phi \|_{L^\infty}\frac{\ep}{d_{\ep}^{(1+\mu)/2}} =  C M_{0} \frac{\ep }{d_{\ep}^{(1+\mu)/2}}.
\]

Concerning the estimate on $w_{4}^\ep$, the proof of Proposition 3.6 in 
\cite{BLM} shows that, for all $i,j$ and $x\in \Omega^\ep$,
\begin{multline*}
   \Bigg|(D\Tca{i,j})^T(x) \int_{\OMep} \Biggl(\frac{\Tca{i,j}(x)-\Tca{i,j}(y)^*}{|\Tca{i,j}(x)-\Tca{i,j}(y)^*|^2}- \frac{\Tca{i,j}(x)}{|\Tca{i,j}(x)|^2}\Biggl)^\perp \omega_{0}(y)\, dy \Bigg| \\ \leq CM_{0}\Big( \ep+\frac{\ep}{\ep |\Tca{i,j}(x)|}\Big),
\end{multline*}
where $C$ depends only on  $\K$. Therefore, after the change of 
variable $z=\ep \Tca{i,j}(x)$, we can estimate the $L^2$ norm in the same way 
as for $w_{3}^\ep$ (see \cite{BLM} for details), and we conclude that
\[
\| w_{4}^\ep \|_{L^2(\OMep)} \leq   CM_{0} \frac{\ep |\ln \ep |^{1/2} }{d_{\ep}^{(1+\mu)/2}}.
\]

\subsection{End of the proof of Proposition \ref{prop:u0eps}}

Collecting the estimates on  $w_{k}^\ep$ for $k=1,\ldots,4$ gives that 
for any $\ep>0$, there exists a constant $C>0$ such that 
\[
 \| u_{0}-v^\ep \|_{L^2(\OMep)} \leq CM_{0} \frac{\ep |\ln \ep | 
}{d_{\ep}^{(1+\mu)/2}}.
\]
Finally, we observe that $u_{0}^\ep -v^\ep$ is the Leray projection of 
$u_{0}-v^\ep$ (see \cite[Sect. 3.2]{BLM}). Hence, the $L^2$-orthogonality of 
this projection implies
\[
\| u_{0}^\ep-v^\ep \|_{L^2(\OMep)} \leq \| u_{0}-v^\ep \|_{L^2(\OMep)} \leq C M_{0}\frac{\ep |\ln \ep | }{d_{\ep}^{(1+\mu)/2}}.
\]
We conclude the proof of Proposition~\ref{prop:u0eps} by the triangle 
inequality.

\section{Energy estimate} \label{sec:energy}

In this section, we establish convergence of
the solution $\unuep$ of the
Navier-Stokes equations in $\Omega^\ep$ to the solution $\uE$ of the Euler
equations in the whole plane, provided $\frac{\ep}{d_\ep^{1+\mu/2}} \leq A\,
\nu$ for some given positive constant $A$. The proof is
performed by an energy estimate on the difference  $\unuep-u^\ep$, where
$u^\ep$ vanishes on $\partial \Omega^\ep$ and is close to $\uE$ (see Proposition 
\ref{prop:uepest}). This strategy is related to the proof of Kato's criterion 
(see \cite{Kato84,TW98,W01}) and was used in \cite{ILN09} to treat the case of 
one shrinking obstacle.

Heuristically, it is possible to pass to the zero-viscosity limit even in the 
presence of the boundary layer due to the no-slip boundary condition, since 
under the conditions of the theorem below the contribution to the energy from 
the Euler solution restricted to the obstacles is negligible. Nevertheless, 
in the case of infinitely-many obstacles a dependence on $d_\ep$, which reduces 
the expected rate of convergence of $\sqrt{\nu}$, valid for one shrinking 
obstacle, appears in the convergence estimate \eqref{eq:L2conv}. This 
dependence can be interpreted as a ``ghost'' of the perforated domain.

\begin{theorem} \label{th:main}
Let $\uE$ be the solution of the Euler equations \eqref{eq:Euler} on 
$[0,\infty)\times \RR^2$ with initial condition $u_{0}$ (given in \eqref{u0} in
terms of an initial vorticity $\omega_{0}\in C^\infty_c(\RR^2)$). 
Let $\unuep$ be the solution of  the Navier-Stokes problem \eqref{eq:NSE} on 
$[0,\infty)\times \Omega^\ep$ (where $\Omega^\ep$ is defined in 
\eqref{Kijeps}-\eqref{OM} with $d^\ep \geq \ep$) with initial velocity 
$u_{0}^{\nu,\ep} \in L^\infty\cap L^{2,\infty}(\Omega^\ep)$, satisfying the
divergence-free condition and
no-penetration boundary condition, and such that
$u_{0}^{\nu,\ep}-u_{0}\in L^2(\Omega^\ep)$.
Then, there exists a constant $A$, 
depending only on $\K$, such that if 
\begin{equation}\label{compatibility}
    \frac{\ep}{d_\ep^{(1+\mu)/2}} \leq \frac{A \nu}{\|\omega_{0}\|_{L^1\cap L^\infty}},
\end{equation}
then for any $0<T<\infty$, 
\begin{equation} \label{eq:L2conv}
         \sup_{0\leq t\leq T}  
  \|\unuep-\uE\|_{L^2(\Omega^\ep)}\leq B_{T} \left( \frac{\sqrt\nu}{d_{\ep}^{(1+\mu)/2}}+ \|\unuep_{0}- u_{0} \|_{L^2(\Omega^\ep)} \right),
\end{equation}
where $B_{T}$ is a constant depending only on $T$, $\|\omega_{0}\|_{L^1\cap
W^{1,\infty}}$, and $\K$.
\end{theorem}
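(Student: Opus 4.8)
The plan is to run a Kato-type energy estimate on the difference $w := \unuep - u^\ep$, where $u^\ep = \phi^\ep \uE - \hep$ is the corrector from Proposition~\ref{prop:uepest}: it is divergence free, vanishes on $\partial\Omega^\ep$, and is $L^4$-close to $\uE$ with the rates listed there. Since $w$ vanishes on the boundary and is divergence free, we may use it as a test function. First I would write the equation satisfied by $u^\ep$: plugging $u^\ep$ into the Navier-Stokes operator produces a forcing term $F^\ep$ that collects (a) the Euler equation (so the leading $\uE_t + \uE\cdot\nabla\uE + \nabla\pE$ cancels), (b) commutator terms coming from the cutoff $\phi^\ep$ and from $\hep$ (terms like $(1-\phi^\ep)\uE_t$, $\partial_t\hep$, $\nabla\phi^\ep$ paired with $\uE$, $\uE\otimes\uE$, etc.), and (c) the viscous term $-\nu\Delta u^\ep$, which is $O(\nu)$ in the relevant norms but is supported in $\Aep$ and involves $\nabla^2\phi^\ep \sim \ep^{-2}$. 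Then I would form $\frac{d}{dt}\tfrac12\|w\|_{L^2(\Omega^\ep)}^2$, integrate by parts in the viscous term to get the good dissipation $-\nu\|\nabla\unuep\|_{L^2}^2$ (using $w = \unuep - u^\ep$ and that $\unuep$ vanishes on the boundary), and bound everything else.

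The key steps, in order: (1) derive the identity
\begin{equation*}
\frac12\frac{d}{dt}\|w\|_{L^2}^2 + \nu\|\nabla\unuep\|_{L^2}^2 = \nu\int \nabla\unuep:\nabla u^\ep - \int (w\cdot\nabla u^\ep)\cdot w - \int F^\ep\cdot w - \int (u^\ep\cdot\nabla u^\ep - \uE\cdot\nabla\uE)\cdot w \pm \cdots,
\end{equation*}
organizing the right side into a ``linear growth'' term $\int(w\cdot\nabla u^\ep)\cdot w$ controlled by $\|\nabla u^\ep\|_{L^\infty}\|w\|_{L^2}^2$ — here the Yudovich bound \eqref{grad uE} gives $\|\nabla\uE\|_{L^\infty}\le C_0 e^{C_0 t}$, contributing the $e^{C_0 t}$-type dependence in $B_T$ — plus source terms; (2) estimate the source terms using Proposition~\ref{prop:uepest}: the terms not involving the viscous correction are bounded by $C\sqrt\nu/d_\ep^{(1+\mu)/2}$-type quantities times $\|w\|_{L^2}$ or its square, after Hölder and Young's inequality, with $L^4$–$L^4$–$L^2$ splittings handled via Lemma~\ref{lem:ladyz} when a factor of $w$ appears in $\Aep$; (3) handle the genuinely viscous source $\nu\int\nabla\unuep:\nabla u^\ep$ — split $\nabla u^\ep = \phi^\ep\nabla\uE + \nabla\phi^\ep\otimes\uE - \nabla\hep$; the dangerous piece is $\nu\int_{\Aep}\nabla\unuep:(\nabla\phi^\ep\otimes\uE)$, which by Cauchy-Schwarz is $\le \nu\|\nabla\unuep\|_{L^2}\|\nabla\phi^\ep\|_{L^2}\|\uE\|_{L^\infty} \le \nu\|\nabla\unuep\|_{L^2}\cdot C\frac{1}{d_\ep^{(1+\mu)/2}}\|\omega_0\|_{L^1\cap L^\infty}$ by \eqref{eq:phiepest}; absorb $\tfrac12\nu\|\nabla\unuep\|_{L^2}^2$ into the dissipation and one is left with $C\nu/d_\ep^{1+\mu}\cdot\|\omega_0\|^2$, which after time integration contributes $\nu T/d_\ep^{1+\mu}$, i.e.\ the square of the claimed rate; (4) treat the term where $w$ itself sits in $\Aep$: $\nu\int\nabla\unuep:\nabla\hep$ is handled by Proposition~\ref{prop:uepest}(iii) and Young, while any term like $\int(\text{stuff})\cdot w$ with $w$ restricted to $\Aep$ uses $\|w\|_{L^2(\Aep)}\le \ep K_1\|\nabla w\|_{L^2(\Aep)}$ from Lemma~\ref{lem:poincare}, then $\|\nabla w\|_{L^2(\Aep)}\le \|\nabla\unuep\|_{L^2}+\|\nabla u^\ep\|_{L^2(\Aep)}$, absorbing the $\nu$-weighted square of the first into the dissipation — this is precisely where the hypothesis \eqref{compatibility}, $\ep/d_\ep^{(1+\mu)/2}\le A\nu/\|\omega_0\|$, is used to make the absorption constant small enough.

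After collecting, one arrives at a differential inequality of Grönwall type,
\begin{equation*}
\frac{d}{dt}\|w\|_{L^2}^2 \le \big(C_0 e^{C_0 t} + C\big)\|w\|_{L^2}^2 + R_\ep(t),\qquad \int_0^T R_\ep(t)\,dt \le C_T\,\frac{\nu}{d_\ep^{1+\mu}},
\end{equation*}
so that $\sup_{[0,T]}\|w\|_{L^2}^2 \le B_T^2\big(\nu/d_\ep^{1+\mu} + \|u_0^{\nu,\ep}-u_0\|_{L^2}^2\big)$ with $B_T$ depending only on $T$, $\|\omega_0\|_{L^1\cap W^{1,\infty}}$, $\K$; the triangle inequality $\|\unuep-\uE\|_{L^2(\Omega^\ep)}\le \|w\|_{L^2}+\|u^\ep-\uE\|_{L^2(\Omega^\ep)}$ together with Proposition~\ref{prop:uepest}(iv) (note $\|u^\ep-\uE\|_{L^2(\Omega^\ep)}$ requires a separate $L^2$ bound, gotten from $\|(1-\phi^\ep)\uE\|_{L^2}\le C\ep/d_\ep^{(1+\mu)/2}\|\uE\|_{L^\infty}$ via \eqref{eq:phiepest} and $\|\hep\|_{L^2}$ via Lemma~\ref{lem:hep} with $p=2$, which is $\le C\sqrt\nu/d_\ep^{(1+\mu)/2}$ under \eqref{compatibility}) finishes the proof. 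The main obstacle I anticipate is the bookkeeping of the boundary-layer-type terms in which $w$ is evaluated on $\Aep$: one must carefully use Lemmas~\ref{lem:poincare} and~\ref{lem:ladyz} to trade $\|w\|_{L^2(\Aep)}$ or $\|w\|_{L^4(\Aep)}$ for $\ep$ (or $\sqrt\ep$) times $\|\nabla w\|_{L^2(\Aep)}$, then split $\nabla w$ and absorb the $\nu$-weighted dissipative part, making sure the surviving constant is controlled by the hypothesis \eqref{compatibility} rather than blowing up as $\ep,d_\ep\to0$; the appearance of the factor $d_\ep^{-(1+\mu)/2}$ (rather than the homogenization-improved $d_\ep^{-(1+\mu)/2}|\ln\ep|^{-1/2}$) is exactly the price of using the crude cutoff corrector instead of an oscillating one.
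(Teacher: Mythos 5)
Your proposal is correct and follows essentially the same route as the paper: an energy estimate on $W^{\nu,\ep}=\unuep-\uep$ with the Bogovski\u{\i}-corrected cutoff velocity of Proposition~\ref{prop:uepest}, trading $\|W^{\nu,\ep}\|_{L^2(\Aep)}$ and $\|W^{\nu,\ep}\|_{L^4(\Aep)}$ for gradients via Lemmas~\ref{lem:poincare} and~\ref{lem:ladyz}, absorbing into the dissipation under \eqref{compatibility}, and concluding by Gr\"onwall plus the triangle inequality with a separate $L^2$ bound on $\uep-\uE$. The only (immaterial) difference is that you keep the dissipation as $\nu\|\nabla\unuep\|_{L^2}^2$ with a cross term, whereas the paper writes it as $\nu\|\nabla W^{\nu,\ep}\|_{L^2}^2$; the term-by-term bounds and the role of the hypothesis are otherwise identical.
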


We recall that $\Omega^\ep$ depends on the configuration of the obstacles, 
which are arranged according to the parameter $\mu\in [0,1]$.

\begin{remark}
Before giving the proof of the theorem, we list two immediate corollaries:
\begin{enumerate}
\renewcommand{\labelenumi}{{\em (\alph{enumi})}}
 \item By choosing as initial data for Navier-Stokes
 $u_{0}^{\nu,\ep}=u_{0}^\ep$, where $u_0^\ep$ solves \eqref{u0eps},
 Proposition~\ref{prop:u0eps} and Theorem~\ref{th:main} immediately give that
\begin{equation*} 
         \sup_{0\leq t\leq T}  
  \|\unuep-\uE\|_{L^2(\Omega^\ep)}\leq B_{T} 
 \left(
\frac{\sqrt\nu}{d_{\ep}^{(1+\mu)/2}}+ 
\frac{\ep |\ln \ep | }{d_{\ep}^{(1+\mu)/2}} \right),
\end{equation*}
where 
\[
\ep |\ln \ep |  \leq C \sqrt{\ep}  \leq C \sqrt{\nu}.
\]
This estimate then implies Theorem~\ref{th:main1}.
\item In the case of an arbitrary, but fixed and independent of $\ep$, number of
 shrinking obstacles, we recover the rate of convergence of $\sqrt{\nu}$ 
 obtained for one obstacle in \cite{ILN09}.
\end{enumerate}
\end{remark}

\begin{proof}[Proof of Theorem~\ref{th:main}]
With $\uep$ defined in Proposition~\ref{prop:uepest}, we introduce
\[
      \Wnuep := \unuep -\uep,
\]
which is divergence free and verifies the Dirichlet boundary condition on $\partial \Omega^\ep$. We observe that $\uep$ satisfies:
\begin{equation*}
  \uep_t = \phiep\, \uE_t - \hep_t =
   \phiep ( -\uE \nabla \uE - \nabla p^E) - \hep_t.
\end{equation*}
Hence, $\Wnuep$ satisfies:
\begin{equation*}
\begin{split}
  \Wnuep_t - \nu \Delta \Wnuep=& -\nabla p^{\nu,\ep}+ \nu \Delta \uep- (\unuep \cdot \nabla) \unuep \\
  &+\phiep [ (\uE \cdot \nabla) \uE + \nabla p^E ] + \hep_t. 
\end{split}\end{equation*}
Multiplying this equation by $\Wnuep$ and integrating by parts, we obtain:
\begin{equation} \label{eq:enerest}
\begin{aligned}
   \frac{1}{2} \frac{d}{dt} \|\Wnuep  & \|^2_{L^2(\OMep)} +\nu 
\|\nabla \Wnuep\|^2_{L^2(\OMep)} \\
=& -\nu \int_{\OMep} \nabla \Wnuep : \nabla 
\uep \, dx  
 - \int_{\OMep} \Wnuep \cdot \big[ (\unuep \cdot \nabla) \unuep\big] \, dx \\
&+ \int_{\OMep} \Wnuep \cdot \big[ \phiep (\uE \cdot \nabla) \uE \big] \, dx 
 +
\int_{\OMep} \Wnuep \cdot \phiep \nabla p^E\\
& + \int_{\OMep} \Wnuep \cdot \hep_t\, dx.
\end{aligned}
\end{equation}
Under the assumption on the initial data, $\unuep$ and $\uep$ have enough regularity to justify the integration by parts and, hence, the energy identity.

We group the trilinear terms in \eqref{eq:enerest} together and tackle each of the other terms separately. 
We start with the trilinear terms:
\[
  \mathcal{I} := - \int_{\OMep} \Wnuep \cdot \big[
   \big(  (\unuep \cdot \nabla) \unuep \big) -
   \big( \phiep (\uE \cdot \nabla) \uE\big)\big]\, dx,
\] 
which we rewrite, owing to $\unuep=\Wnuep+\uep$ and the divergence-free condition on $\Wnuep$, as
 \begin{align*}
  \mathcal{I} &= - \int_{\OMep} \Wnuep \cdot 
   \big[  (\Wnuep \cdot \nabla) \uep \big] \, dx
  - \int_{\OMep} \Wnuep \cdot 
   \big[  (\uep \cdot \nabla) \uep \big] \, dx \\
   & \qquad\qquad+ \int_{\OMep}  \Wnuep \cdot  \big[ \phiep (\uE \cdot 
   \nabla) \uE \big]\, dx.
 \end{align*}
We will utilize that $\uep$ is close to both $\phiep\, \uE$ and $\uE$ to control the last two terms. Hence, we add and subtract \ $\int_{\OMep} \Wnuep\cdot\big[ (\uep\cdot \nabla) \uE\big]\,dx$:
 \begin{align}
      \mathcal{I} &= - \int_{\OMep} \Wnuep \cdot 
   \big[  (\Wnuep \cdot \nabla) \uep \big] \, dx
  - \int_{\OMep} \Wnuep \cdot 
   \big[  (\uep \cdot \nabla) (\uep-\uE) \big] \, dx 
   \nonumber \\
   &\qquad \qquad + \int_{\OMep}  \Wnuep\cdot \big[ 
   \big((\phiep \uE - \uep)\cdot 
   \nabla\big) \uE \big]\, dx  
   \nonumber \\
    & =  - \int_{\OMep} \Wnuep \cdot 
   \big[  (\Wnuep \cdot \nabla) \uep \big] \, dx
  + \int_{\OMep} (\uep-\uE) \cdot 
   \big[  (\uep \cdot \nabla) \Wnuep \big] \, dx 
   \nonumber \\
   &\qquad \qquad + \int_{\OMep} \Wnuep \cdot  \big[ 
   ( \hep\cdot 
   \nabla) \uE \big]\, dx, \label{eq:trilinear}
 \end{align}
where we have integrated by parts in the second term and have used that $\phiep \uE- \uep = \hep$.

We bound each of the three integrals in \eqref{eq:trilinear} above separately. We set:
\[
     \mathcal{I}_1 := - \int_{\OMep} \Wnuep \cdot 
   \big[  (\Wnuep \cdot \nabla) \uep \big] \, dx.
\]
We have
\[
   \nabla \uep = \phiep \nabla \uE + \nabla \phiep\otimes \uE -\nabla \hep, 
\]
with the last two terms supported on $\Aep$  (defined in \eqref{def Aep}).
Hence, using estimates  \eqref{eq:phiepest} on $\nabla \phi^\ep$,
\eqref{uE infty}-\eqref{grad uE} on $\uE$,  and  
estimate \eqref{i:uepest.c} in Proposition~\ref{prop:uepest} on $\nabla \hep$,
we have  thanks to Lemma~\ref{lem:ladyz}:
 \begin{align}
 | \mathcal{I}_1| \leq&  \|\Wnuep\|^2_{L^2(\OMep)} \| \nabla \uE \|_{L^\infty}  + \|\Wnuep\|^2_{L^4(\Aep)} \|  \uE \|_{L^\infty}  \| \nabla \phi^\ep \|_{L^2}  \nonumber\\
 &+  \|\Wnuep\|^2_{L^4(\Aep)}   \| \nabla \hep \|_{L^2} \nonumber\\
 \leq& C_{0} e^{C_{0} t} \|\Wnuep\|^2_{L^2(\OMep)} + K_{3} 
\frac{\ep}{d_{\varepsilon}^{(1+\mu)/2}}  
\|\nabla\Wnuep\|^2_{L^2(\OMep)}, \label{eq:I1est} 
  \end{align}
where $K_{3}$ depends only on $\omega_{0}$ and $\K$.
From the discussion in Remark \ref{rem.KE}, $K_{3}$ can be taken of the form 
$K_{3}=\tilde{K_{3}} \| \omega_{0}\|_{L^1\cap L^\infty}$, where $\tilde{K_{3}}$
depends only on $\K$.
For
\[
     \mathcal{I}_2 := \int_{\OMep} (\uep- \uE ) \cdot 
   \big[  (\uep \cdot \nabla) \Wnuep \big] \, dx,
\]
we note that ${\rm supp}(\uep- \uE )\subset \Aep$. Then, H\"older's 
and Cauchy's inequalities, together with estimates 
\eqref{eq:phiepest} on $1- \phi^\ep$,  \eqref{uE infty} on $\uE$, and estimates 
\eqref{i:uepest.a} and \eqref{i:uepest.d} in Proposition~\ref{prop:uepest}, 
give that
\begin{align}
  |\mathcal{I}_2| &\leq \|\nabla\Wnuep
  \|_{L^2(\OMep)} \|\uep -\uE \|_{L^4} 
  \|\uep\|_{L^{4}(\Aep)} \nonumber \\
  &\leq \|\nabla\Wnuep
  \|_{L^2(\OMep)}K_E  \frac{\sqrt{\ep}}{   d_{\varepsilon}^{(1+\mu)/4} }\Bigl( \|\uE\|_{L^\infty} \frac{C_{4} \sqrt{\ep}}{   d_{\varepsilon}^{(1+\mu)/4} } + K_E \frac{\sqrt{\ep}}{   d_{\varepsilon}^{(1+\mu)/4} } \Big) \nonumber\\
  &\leq  \frac{\nu}{8}  \|\nabla\Wnuep\|^2_{L^2(\OMep)} + K_{4}  \frac{\ep^2}{ \nu  d_{\varepsilon}^{1+\mu} },  \label{eq:I2est}
\end{align}
where $K_{4}$ depends only on $\omega_{0}$ and $\K$.

Lastly, we bound
\[
     \mathcal{I}_3 :=\int_{\OMep} \Wnuep \cdot  \big[ 
   ( \hep\cdot 
   \nabla) \uE \big]\, dx.
\]
Using that \ ${\rm supp\ }\hep\subset \Aep$, we apply H\"older's and Cauchy's 
inequalities again, together with estimates  \eqref{i:uepest.a} of 
Proposition~\ref{prop:uepest} for $\hep$,  \eqref{uE CZ} for $\nabla \uE$, and 
Lemma~\ref{lem:ladyz}, to obtain:
\begin{align}
   |\mathcal{I}_3| &\leq \|\Wnuep\|_{L^4(\Aep)} 
  \|\hep\|_{L^4} \|\nabla \uE\|_{L^{2}}
  \leq K  \frac{\ep}{   d_{\varepsilon}^{(1+\mu)/4} } \| \nabla \Wnuep\|_{L^2(\OMep)} 
   \nonumber \\
 &\leq  \frac{\nu}{8}  \|\nabla\Wnuep\|^2_{L^2(\OMep)} + K_{5}  \frac{\ep^2}{ \nu  d_{\varepsilon}^{(1+\mu)/2} },  \label{eq:I3est}
\end{align}
where $K_{5}$ depends only on $\omega_{0}$ and $\K$.

From \eqref{eq:trilinear} and \eqref{eq:I1est}, 
\eqref{eq:I2est}, \eqref{eq:I3est}, it follows that
\begin{equation} \label{eq:Iest}
 \begin{aligned}
  |\mathcal{I}| &\leq  \Big(\frac{\nu}{4} +K_{3} \frac{\ep}{d_{\varepsilon}^{(1+\mu)/2}} \Big) \|  \nabla\Wnuep\|^2_{L^2(\OMep)} 
  + C_{0} e^{C_{0} t} \|\Wnuep\|^2_{L^2(\OMep)}  \\
  &\qquad + K_{6}  \frac{\ep^2}{ \nu  d_{\varepsilon}^{1+\mu} },
 \end{aligned}
\end{equation}
 where $K_{6}$ depends only on $\omega_{0}$ and $\K$.

We now turn to the remaining integral terms in \eqref{eq:enerest}.
We begin with
\[
    \mathcal{J} := - \nu \int_{\OMep} \nabla \Wnuep   
      : \nabla \uep \, dx. 
 \]
Applying Cauchy-Schwartz followed by Cauchy's inequality, owing to estimate
(\ref{i:uepest.c}) in Proposition~\ref{prop:uepest} for $\nabla \hep$, 
\eqref{eq:phiepest} for $\nabla \phi^\ep$, and \eqref{uE infty}-\eqref{uE CZ} 
for $\uE$, gives:
\begin{align} 
 |\mathcal{J}| &\leq \sqrt{\nu/2} \|\nabla 
 \Wnuep\|_{L^2(\OMep)} \, \sqrt{2\nu} \|\nabla   
 \uep\|_{L^2} \nonumber\\
 &\leq \frac{\nu}{4}   \|\nabla 
 \Wnuep\|_{L^2(\OMep)}^2 + \nu \Big( \| \uE \|_{L^\infty}  \frac{C_{2}}{d_{\varepsilon}^{(1+\mu)/2}}
 + \| \nabla \uE \|_{L^2}
 +  \frac{K_E}{d_{\varepsilon}^{(1+\mu)/2}} \Big)^2\nonumber\\
 &\leq \frac{\nu}{4}   \|\nabla 
 \Wnuep\|_{L^2(\OMep)}^2 +  K_{7}   \frac{\nu}{d_{\varepsilon}^{1+\mu}},  
\label{eq:Jest}
\end{align}
 where $K_{7}$ depends only on $\omega_{0}$ and $\K$.
 
Finally, we consider the last two integral terms in \eqref{eq:enerest}.
We set:
\[
    \mathcal{H}_1 := \int_{\OMep} \phiep \Wnuep \cdot  \nabla \pE \, dx = - 
\int_{\OMep} \pE \Wnuep \cdot \nabla\phiep \, dx. 
\]
Consequently, by H\"older's and Cauchy's inequalities, and the Poincar\'e's 
inequality \eqref{eq:Poincare}, using that $\nabla \phiep$ is supported on 
$\Aep$ together with estimates \eqref{eq:phiepest} for $\nabla \phi^\ep$ and 
\eqref{pE} for $\pE$, we have
\begin{align}
  |\mathcal{H}_1| &\leq \|\Wnuep\|_{L^2(\Aep)}
 \|\nabla \phiep\|_{L^{2}(\Aep)} 
  \|\pE \|_{L^\infty(\Aep)}  \nonumber \\
  &\leq  \ep K_{1} \,\|\nabla \Wnuep\|_{L^2(\Aep)}
  \frac{C_{2}}{d_{\varepsilon}^{(1+\mu)/2}}
  \|\pE\|_{L^\infty(\Aep)}  \nonumber \\
 &\leq \frac{\nu}{8} 
\|\nabla \Wnuep\|_{L^2(\Aep)}^2 +K_{8}\, \frac{\ep^2}{\nu d_{\varepsilon}^{1+\mu}} ,
 \label{eq:H1est}
\end{align}
 where $K_{8}$ depends only on $\omega_{0}$ and $\K$.

We tackle the last integral term 
\[
   \mathcal{H}_2 := -
\int_{\OMep} \Wnuep \cdot \hep_t\, dx.
\]
As $\hep_t$ is supported on $\Aep$, a similar application of the 
Cauchy-Schwartz, Cauchy's inequalities, and the Poincar\'e's 
inequality \eqref{eq:Poincare}, together with estimate
\eqref{i:uepest.b} in Proposition~\ref{prop:uepest} on $\hep_t$, gives:
\begin{align}
  |\mathcal{H}_2| &\leq \|\Wnuep\|_{L^2(\Aep)}  
  \|\hep_t\|_{L^2(\Aep)} \leq 
   \ep K_{1} \,\|\nabla \Wnuep\|_{L^2(\Aep)} K_E \frac{\sqrt{\ep}}{   d_{\varepsilon}^{(1+\mu)/4} } 
  \nonumber \\
  &\leq \frac{\nu}{8} 
\|\nabla \Wnuep\|_{L^2(\Aep)}^2 +K_{9}\, \frac{\ep^3}{\nu d_{\varepsilon}^{(1+\mu)/2}} ,  \label{eq:H2est}
\end{align}
 where $K_{9}$ depends only on $\omega_{0}$ and $\K$.

Putting together the bounds  \eqref{eq:Iest} obtained for $\mathcal{I}$, 
 \eqref{eq:Jest} for $\mathcal{J}$, and 
\eqref{eq:H1est}-\eqref{eq:H2est} for $\mathcal{H}$, yields the following 
{\em a priori}  energy estimate:
\begin{align*} 
  \frac{1}{2} \frac{d}{dt} \|\Wnuep&\|^2_{L^2(\OMep)} +\nu 
\|\nabla \Wnuep\|^2_{L^2(\OMep)}\\
\leq &
\Big(\frac{3\nu}{4} +K_{3} \frac{\ep}{d_{\varepsilon}^{(1+\mu)/2}} \Big) \|  \nabla\Wnuep\|^2_{L^2(\OMep)} 
  + C_{0} e^{C_{0}t} \|\Wnuep\|^2_{L^2(\OMep)}  \\
  &\qquad +  \frac{\ep^2}{ \nu  d_{\varepsilon}^{1+\mu} } \Big( K_{6}  +K_{8}+ K_{9}\, \ep^3 d_{\varepsilon}^{(1+\mu)/2} \Big) + K_{7}   \frac{\nu}{   d_{\varepsilon}^{1+\mu} }.
\end{align*}

We will apply  Gr\"onwall's Lemma to the previous inequality. We therefore
need the coefficient of $\|\nabla\Wnuep\|_{L^2}^2$ to be strictly less than
$\nu$, which forces the following relation between $\nu$, $\ep$ and $d_{\ep}$:
\begin{equation}\label{constraint}
   \frac{\ep}{d_{\varepsilon}^{(1+\mu)/2}} \leq \tilde A\, \nu, 
\end{equation}
where $A = 1/(4K_{3})$.
Recalling that $K_{3}=\tilde{K_{3}} \| \omega_{0}\|_{L^1\cap L^\infty}$, we
obtain 
the compatibility condition \eqref{compatibility}
with $A = 1/(4\tilde K_{3})$. This condition also implies that
\[
\frac{\ep^2}{ \nu  d_{\varepsilon}^{1+\mu} } \Big( K_{6}  +K_{8}+ K_{9}\, \ep^3 
d_{\varepsilon}^{(1+\mu)/2} \Big) \leq K_{10} \nu,
\]
 where $K_{10}$ depends only on $\omega_{0}$ and $\K$.

For any given $0<T<\infty$, we next define
\[
K_{T}:= C_{0} e^{C_{0} T},
\]
with $C_0$ as in \eqref{grad uE}.
Then,  by Gr\"onwall's Lemma we have that, for any $t\in [0,T]$,
\[
\|\Wnuep(t,\cdot) \|^2_{L^2(\OMep)} \leq  \Big[ \|\Wnuep(0,\cdot) 
\|^2_{L^2(\OMep)} + \frac{K_{10} \nu}{K_{T}}  +   \frac{K_{7}\nu}{ K_{T}  
d_{\varepsilon}^{1+\mu} } 
\Big] e^{2 K_{T}t},
\]
which implies
\begin{equation} \label{eq:conv1}
 \|\Wnuep(t,\cdot) \|_{L^2(\OMep)} \leq  \Bigg[ \|\Wnuep(0,\cdot)
 \|_{L^2(\OMep)} + \sqrt{\frac{K_{10} \nu}{K_{T}}}  + \sqrt{   \frac{ K_{7}
  \nu}{ K_{T}  d_{\varepsilon}^{1+\mu} }}  \Bigg] e^{ K_{T}t}.
\end{equation}

To conclude the proof, we apply the triangle inequality to obtain
\[
 \|(\unuep -\uE)(t,\cdot) \|_{L^2(\OMep)}  \leq
  \|\Wnuep(t,\cdot) \|_{L^2(\OMep)} + \| (\uep-\uE)(t,\cdot)\|_{L^2(\OMep)},
\]
and
\[
 \|\Wnuep(0,\cdot) \|_{L^2(\OMep)}   \leq \|\unuep_{0} -u_{0} \|_{L^2(\OMep)}
 +\| \uep(0,\cdot)- u_{0}\|_{L^2(\OMep)}. 
\]
It then follows from \eqref{eq:phiepest}, \eqref{uE infty}, and point
\eqref{i:uepest.a} in Proposition~\ref{prop:uepest} that
\begin{align}
  \| (\uep-\uE)(t,\cdot)\|_{L^2(\OMep)}& \leq \| \uE \|_{L^\infty}
\|1-\phi^\ep\|_{L^2} + \| \hep \|_{L^4} \| 1 \|_{L^4(\Aep)} \nonumber \\
  & \leq C \frac{\ep}{d_{\ep}^{(1+\mu)/2}} \leq C' \nu,  \label{eq:conv2}
\end{align}
for all $t\in [0,\infty)$, using also condition \eqref{constraint}.
Lastly, we combine \eqref{eq:conv1}-\eqref{eq:conv2} and note that
\[
   \nu, \sqrt\nu \leq C \frac{\sqrt\nu}{d_{\ep}^{(1+\mu)/2}},
\]
for some $C>0$, as $\nu$ and $d_{\ep}$ are both bounded above.
Estimate \eqref{eq:L2conv} then follows.

\end{proof}

\begin{remark}\label{rem:end}
The dependence on $d_\ep$ in estimate \eqref{eq:L2conv} comes from the 
contribution of $\mathcal{J}$, more precisely from the terms:
\[
\nu \int_{\Aep} \nabla \Wnuep  : [    \nabla \phi^\ep\otimes \uE] \, dx
\quad \text{and}\quad
\nu \int_{\Aep} \nabla \Wnuep  : \nabla \hep \, dx,
\]
Without these terms, we would establish a rate of convergence of order  
$\sqrt{\nu}$ instead to $\frac{\sqrt{\nu}}{d_{\ep}^{(1+\mu)/2}}$, as in 
the case of a fixed number of inclusions. Even though these integrals are 
taken on $\Aep$, the measure of which is small, the $L^2$-norm of $\nabla 
\phi^\ep$ is unbounded \eqref{eq:phiepest}, and hence we are not able to prove 
these terms are of the same order as the other terms in the energy estimate.

To improve the energy estimate, we can ask whether there exists a better choice
for $\phi^\varepsilon$ than our construction \eqref{eq:phi}-\eqref{phi ep}.
Namely, we look for a $\tilde\phi^\varepsilon$ that  vanishes in
$B(0,d_{\varepsilon})^c$, is equal to $1$ in $B(0,\varepsilon)$, and  such that
$\| \nabla \tilde\phi^\varepsilon\|_{L^2}$ is the smallest possible. This
optimization problem has a unique solution, which is the solution of
\[
\Delta \tilde\phi^\varepsilon = 0,\quad \tilde\phi^\varepsilon\vert_{\partial
B(0,\varepsilon)}=1, \quad \tilde\phi^\varepsilon\vert_{\partial
B(0,d_{\varepsilon})}=0.
\]
This slution has an explicit expression of the form: 
\[
\tilde\phi^\varepsilon(x)= \frac{\ln ( \frac{|x|}{d_{\varepsilon}}) }{\ln ( \frac{\varepsilon}{d_{\varepsilon}})}.
\]

Defining
\begin{equation}\label{new phi}
 \phi^\varepsilon(x) := 1- \sum_{i=1}^{n_{1}^\varepsilon} 
\sum_{j=1}^{n_{2}^\varepsilon} \tilde \phi^\varepsilon
\left(x-z_{i,j}^\ep\right),
\end{equation}
it follows for $d_{\varepsilon} \gg \varepsilon$,
\[
\| \nabla \phi^\varepsilon\|_{L^2} \leq C\sqrt{\frac{n_{1}^\varepsilon n_{2}^\varepsilon}{|\ln \varepsilon|}}\leq \frac{C}{d_{\ep}^{(1+\mu)/2} \sqrt{|\ln \varepsilon|}},
\]
which is a slight improvement over \eqref{eq:phiepest}. We cannot expect a rate
 better than $\frac{\sqrt{\nu}}{d_{\ep}^{(1+\mu)/2} \sqrt{|\ln
\varepsilon|}}$ in \eqref{eq:L2conv}, because, at $\nu$ fixed, it is well known
that a ``strange term'' appears when
\[
d_{\ep}^{(1+\mu)/2} \sqrt{|\ln \varepsilon|} \to C >0.
\]
(See \cite{Allaire90a,CM82} for the case $\mu =1$ and \cite{Allaire90b} for the
case $\mu=0$.)

We  note that for $\mu =0$, the situation in which the distance is smaller than
the critical distance (namely, $d_{\ep}^{1/2} \sqrt{|\ln
\varepsilon|} \to 0$) was not treated in \cite{Allaire90b} at fixed viscosity.
This case was considered only when the inclusions are distributed in both
directions (related to the case  $\mu=1$) filling a bounded domain
\cite{Allaire90a}. Indeed, this set up implies a uniform Poincar\'e estimate.
Allaire investigated the limit of
$u^{\nu,\varepsilon}/(d_{\ep} \sqrt{|\ln \varepsilon|})$, which in turn implies
a limit for the term:
\[
\int_{\Aep} |\nabla  u^{\nu,\varepsilon}| |    \nabla \phi^\ep | \, dx = \int_{\Aep} \Bigg|\frac{ \nabla  u^{\nu,\varepsilon}}{d_{\ep} \sqrt{|\ln \varepsilon|}} \Bigg| \Big|   d_{\ep} \sqrt{|\ln \varepsilon|} \nabla \phi^\ep \Big| \, dx.
\]

Therefore, guided by the optimal results for the Laplace and Stokes equations,
it would be natural to consider $\phi^\varepsilon$ defined as in \eqref{new phi}
rather than in \eqref{phi ep}. However, the more natural construction of the
cut-off function leads to a series of technical complications.
Besides the adaptation of \eqref{eq:phiepest}, we have used several times 
elliptic estimates on a domain of the form $U=(-2,2)^2\setminus \K$ and the fact
that $\Aep=\cup (z_{i,j}^\ep+\ep U)$. On the other hand, dilating 
$B(0,d_{\varepsilon})\setminus B(0,\varepsilon)$ by a factor of $\ep$ results
in a set the size of which grows unboundedly as $\varepsilon\to 0$. To adapt
the approach of Section \ref{sec:prelim} to the improved setting, we would
therefore have to:
\begin{itemize}
 \item estimate the Bogovski{\u\i}  constant $\widetilde{C}_{p}$ in Lemma \ref{lem:hep} in terms of $\varepsilon$, which should be uniformly bounded thanks to \cite[Theorem III.3.1]{Galdi94};
 \item estimate the Poincar\'e constant $K_{1}$ in Lemma \ref{lem:poincare};
 \item estimate the embedding constant $K$ in the proof of Lemma \ref{lem:ladyz}.
\end{itemize}
\end{remark}

\end{document}